\documentclass[11pt]{amsart}
\usepackage{a4wide}
\usepackage{amsmath}
\usepackage[utf8]{inputenc}
\usepackage{amssymb}
\usepackage{amsopn}
\usepackage{epsfig}
\usepackage{amsfonts}
\usepackage{latexsym}
\usepackage{graphicx}
\usepackage{enumerate}
\usepackage{mathrsfs}
\usepackage[colorlinks,linkcolor=blue,anchorcolor=blue,citecolor=blue]{hyperref}
\usepackage{tikz}

\newtheorem{theorem}{Theorem}[section]
\newtheorem{lemma}[theorem]{Lemma}
\newtheorem{proposition}[theorem]{Proposition}
\newtheorem{corollary}[theorem]{Corollary}

\theoremstyle{definition}

\newtheorem{example}[theorem]{Example}

\newtheorem{question}[theorem]{Question}
\theoremstyle{remark}
\newtheorem{remark}[theorem]{Remark}

\numberwithin{equation}{section}
\counterwithin{figure}{section}

\newcommand{\R}{\ensuremath{\mathbb{R}}}
\newcommand{\N}{\ensuremath{\mathbb{N}}}

\newcommand{\F}{\mathcal F}

\renewcommand{\i}{\mathbf{i}}
\renewcommand{\v}{\mathbf{v}}
\renewcommand{\u}{\mathbf{u}}

\newcommand{\set}[1]{\left\{#1\right\}}

\newcommand{\ep}{\varepsilon}
\newcommand{\f}{\infty}

\newcommand{\si}{\sigma}

\newcommand{\diam}{\mathrm{diam}}

\newcommand{\ii}{\mathbf{i}}
\newcommand{\jj}{\mathbf{j}}

\newcommand{\p}{\boldsymbol{p}}

\begin{document}
	
\allowdisplaybreaks
	
\title{Asymptotic separation of periodic orbits and fractal dimension}

\author[D. Kong]{Derong Kong}
\address[D. Kong]{College of Mathematics and Statistics, Center of Mathematics,  Chongqing University, Chongqing 401331, People's Republic of China.}
\email{derongkong@126.com}

\author[Z. Wang]{Zhiqiang Wang}
\address[Z. Wang]{College of Mathematics and Statistics, Center of Mathematics, Chongqing University, Chongqing 401331, People's Republic of China \;\&\; Department of Mathematics, University of British Columbia, Vancouver, British Columbia, V6T 1Z2, Canada}
\email{zhiqiangwzy@163.com,~zqwangmath@cqu.edu.cn}

\author[D. Yu]{Daohua Yu}
\address[D. Yu]{College of Mathematics and Statistics, Key Laboratory of Nonlinear Analysis and Its Application (Ministry of Education), Chongqing University, Chongqing 401331, People's Republic of China.}
\email{yudh@cqu.edu.cn}

\date{\today}

\subjclass[2020]{Primary: 37C25, 28A80; Secondary: 37B10}

\begin{abstract}
For a totally bounded metric dynamical system $(X, d, T)$ we introduce a new critical value $\mathfrak s(X, d, T)$ which quantifies the asymptotic separation of periodic orbits.
More precisely, $\mathfrak s(X,d,T)$ is defined to be the supremum of all $s\ge 0$ for which there exists a sequence of periodic orbits $\{\mathcal O_k\}_{k=1}^\infty$ in $(X,d,T)$ such that
\[
\lim_{k\to \infty} \#\mathcal O_k = +\infty \quad \text{and} \quad \liminf_{k\to\infty}\#\mathcal O_k\cdot\eta(\mathcal O_k)^s >0,
\]
where $\eta(\mathcal O_k)$ denotes the smallest distance between distinct points in $\mathcal O_k$.
When the dynamical system is induced by a self-similar iterated function system $\mathcal F$ satisfying the strong separation condition, we prove that this critical value is equal to the Hausdorff dimension $s_{\mathcal F}$ of its self-similar attractor. Furthermore, under a quantitative separation condition on periodic orbits we show that the associated empirical periodic measures converge weakly to the normalized $s_{\mathcal F}$-dimensional Hausdorff measure on the self-similar attractor.
\end{abstract}

\keywords{periodic dimension, periodic content, periodic measure, self-similar IFS, fractal dimension}

\maketitle

\section{Introduction}\label{sec:Introduction}

Periodic points and periodic measures are two fundamental objects in dynamical systems.
Despite their apparent simplicity, they play an indispensable role in the study of invariant measures.
In 1961, Parthasarathy \cite{Parthasarathy-1961} and Oxtoby \cite{Oxtoby-1963} proved that for the shift transformation on the two-sided symbolic space over a compact metric space, periodic measures are dense in the space of invariant measures under the weak topology.
This result was subsequently extended by Sigmund \cite{Sigmund-1970} to Axiom $A$ diffeomorphisms on a compact manifold, whose proof utilizes the specification property.
More generally, Liang, Liu and Sun \cite{Liang-Liu-Sun-2009} showed that for any $C^{1+\alpha}$-diffeomorphism on a compact manifold, every ergodic hyperbolic measure can be approximated by hyperbolic periodic measures.
Furthermore, Wang and Sun \cite{Wang-Sun-2010} proved that the Lyapunov exponents of a hyperbolic ergodic measure can also be approximated by those of hyperbolic periodic measures.
Kalinin \cite{Kalinin-2011} established an analogous periodic approximation theorem for H\"{o}lder continuous linear cocycles.
The growth rate of the number of periodic points is closely related to the topological entropy, see \cite{Bowen-1971,Katok-1980,Burguet-2020}.

Periodic points and periodic measures also play an important role in ergodic optimization, see the survey \cite{Jenkinson-2006,Jenkinson-2019}.
One of the fundamental questions in ergodic optimization is the Typical Periodic Optimization (TPO) conjecture, which was proposed by Yuan and Hunt~\cite{Yuan-Hunt-1999}.
Roughly speaking, the TPO conjecture states that the maximizing measures of generic typical observation functions are supported on periodic orbits.
In 2016, Contreras~\cite{Contreras-2016} proved the TPO conjecture for uniformly expanding maps on compact metric spaces and generic Lipschitz functions.
In 2025, Huang et al.~\cite{Huang-Lian-Ma-Xu-Zhang-2025} established the TPO conjecture for a larger class of dynamical systems (including Axiom $A$ and uniformly expanding systems) and generic H\"{o}lder functions.
In their work, they introduced the gap of a periodic orbit, denoted by $\eta(\mathcal{O})$, which is defined to be the smallest distance of distinct points in a periodic orbit $\mathcal{O}$.

Recently, Gan and the third author \cite{Yu-Gan-2025} showed that, for any ergodic endomorphism on the $d$-torus, there exists a sequence of periodic orbits $\{\mathcal{O}_k\}_{k=1}^\f$ such that
\begin{equation}\label{result-Yu-Gan}
  \lim_{k \to +\f} \# \mathcal{O}_k = +\f \quad \text{and} \quad \liminf_{k \to +\f} \# \mathcal{O}_k \cdot   \eta( \mathcal{O}_k )^d >0,
\end{equation}
where $\#A$ denotes the cardinality of a set $A$.
Furthermore, they proved that if a sequence of periodic orbits $\{\mathcal{O}_k\}_{k=1}^\f$ satisfies the conditions in (\ref{result-Yu-Gan}), then the corresponding sequence of periodic measures $\{\mu_k\}_{k=1}^\f$ converges weakly to the Lebesgue measure on the $d$-torus.
We emphasize that the exponent $d$ in (\ref{result-Yu-Gan}) is the dimension of the ambient space.

In the present paper, we attempt to investigate analogous results in the fractal setting.
When the dynamical system is induced by a self-similar iterated function system $\mathcal F$ satisfying the strong separation condition, we show in Theorem \ref{thm:critical-value} that for any $s<s_{\F}$ there exists a sequence of periodic orbits $\{ \mathcal O_k \}_{k=1}^\f$ such that
\[
\lim_{k \to +\f} \# \mathcal{O}_k = +\f \quad \text{and} \quad  \liminf_{k\to+\f}\#\mathcal O_k\cdot\eta(\mathcal O_k)^{s}>0,
\]
where $s_{\mathcal F}$ is the Hausdorff dimension of its self-similar attractor.

Another motivation comes from the work of Boshernitzan \cite{Boshernitzan-1993}, in which he studied the quantitative recurrence rate of a metric measure preserving dynamical system $(X,d,\mu,T)$. In fact, he proved that for $\mu$-almost every $x\in X$,
\[
\liminf_{n\to+\f}n\cdot   d(T^n x, x)^\alpha<+\f,
\]
where $\alpha$ is the Hausdorff dimension of $X$. Instead of looking at the recurrence rate of typical points, we consider the shrinking rate of $\eta(\mathcal O_k)$ for a sequence of periodic orbits $\{\mathcal O_k\}_{k=1}^\f$. When the dynamical system is induced by a self-similar iterated function system $\mathcal F$ satisfying the strong separation condition, we show in Theorem \ref{thm:critical-value} that for any sequence of periodic orbits $\{\mathcal O_k\}_{k=1}^\f$ we have
\[
\limsup_{n\to+\f}\#\mathcal O_k\cdot\eta(\mathcal O_k)^{s_{\mathcal F}}<+\f.
\]

\subsection{A new critical value}

We first introduce a new critical value concerning the asymptotic separation of periodic orbits for a metric dynamical system.

Let $(X,d)$ be a totally bounded metric space, i.e., for any $\ep>0$ there exist finitely many points $x_1, x_2, \ldots, x_N \in X$ such that
\[ X = \bigcup_{j=1}^N B(x_j, \ep), \]
where $B(x,r) : = \big\{ y \in X: d(y,x) < r \big\}$ is the open ball in $X$ with center $x$ and radius $r$.
Note that a compact metric space is totally bounded, and any subset of a totally bounded metric space is totally bounded.
Let $T: X \to X$ be a self-mapping.
The triple $(X,d,T)$ will be called a \emph{metric dynamical system}.

Given a metric dynamical system $(X, d, T)$, the \emph{orbit} of $x \in X$ is given by \[ \mathcal{O}_x := \big\{ T^n x: n \in \N\cup\{0\} \big\}. \]
A point $x \in X$ is called a \emph{periodic point} if there exists $n \in \N$ such that $T^n x = x$, and the integer $n$ is called a \emph{period} of $x$.
If $x\in X$ is a periodic point, then $\# \mathcal{O}_x$ is the least period of $x$.
Note that any period of $x$ is a multiple of $\# \mathcal{O}_x$.
For $A \subset X$, define \[ \eta(A) := \inf\big\{ d(x,y): x\ne y \in A \big\}, \] with the convention that $\inf \emptyset = +\f$.

Motivated by the result in (\ref{result-Yu-Gan}), we define a quantity that describes the asymptotic separation of periodic orbits.
For $s \ge 0$ and $n \in \N$, define
\begin{equation}\label{eq:P-n} \mathfrak{P}_n^s(X,d,T) := \sup\big\{ \# \mathcal{O}_x \cdot  \eta( \mathcal{O}_x )^s: \;\text{$x\in X$ is a periodic point with $\# \mathcal{O}_x \ge n$}  \big\}, \end{equation}
where we adopt the convention that $\sup \emptyset = 0$.
Note that $\mathfrak{P}_n^s(X,d,T)$ is decreasing as $n \to +\f$.
Thus, we can define
\begin{equation}\label{eq:P-def}
 \mathfrak{P}^s(X,d,T) := \lim_{n \to +\f} \mathfrak{P}_n^s(X,d,T) \in[0,+\f],
\end{equation}
which is called the \emph{$s$-dimensional periodic content} of $(X,d,T)$.

We will show in Lemma \ref{lem:critical-value}  that there exists a critical value of $s$ at which $\mathfrak{P}^s(X,d,T)$ jumps from $+\f$ to $0$ (see Figure \ref{figure-P-s} for an illustration).
We are interested in this critical value
\begin{equation}\label{eq:critical-value}
\mathfrak{s}(X,d,T):= \inf\big\{ s \ge 0: \mathfrak{P}^s(X,d,T) =0 \big\} = \sup \big\{ s \ge 0: \mathfrak{P}^s(X,d,T) = +\f \big\},
\end{equation}
and we call it  the \emph{periodic dimension} of $(X,d,T)$.
Indeed, we also have
\[ \mathfrak{s}(X,d,T)= \inf\big\{ s \ge 0: \mathfrak{P}^s(X,d,T) < + \f \big\} = \sup \big\{ s \ge 0: \mathfrak{P}^s(X,d,T) >0 \big\}. \]

\begin{figure}[htbp]
\centering
\begin{tikzpicture}
  \draw[->] (-0.3,0) -- (4,0) node[right] {$s$};
  \draw[->] (0,-0.3) -- (0,3.4);
  \draw[very thick] (0,3) -- (1.8,3); \draw[very thick] (1.8,0) -- (3.95,0);
  \node at (-0.15,-0.2) {$0$};
  \node[left] at (0,2) {$\mathfrak{P}^s(X,d,T)$};
  \node[left] at (0,3) {$+\infty$};
  \draw[dashed](1.8,0)--(1.8,3);
  \node at (1.8,-0.4) {$\mathfrak{s}(X,d,T)$};
\end{tikzpicture}
\caption{The graph of the periodic content $\mathfrak{P}^s(X,d,T)$, which jumps from infinity to zero at the periodic dimension $\mathfrak s(X, d, T)$.}
\label{figure-P-s}
\end{figure}

Note that the periodic dimension $\mathfrak{s}(X,d,T)$ depends on the metric $d$ of $X$. By the definition of periodic dimension, if the periods of periodic points in a metric dynamical system $(X,d,T)$ are bounded, then we have $\mathfrak{s}(X,d,T) =0$, but the converse is generally false.
We will present some properties of periodic content $\mathfrak P^s(X, d, T)$ and period dimension $\mathfrak s(X, d, T)$ in Section \ref{sec:property}.

\subsection{Self-similar iterated function system}
Next, we focus on dynamical systems induced by self-similar iterated function systems in fractal geometry.

A self-similar \emph{iterated function system} (IFS) on $\R^d$ is a finite collection of contractive similitudes, i.e., $\F=\big\{f_i(x)=r_i O_i x+b_i \big\}_{i=1}^m$, where each $0< r_i<1$, each $b_i\in\R^d$ and each $O_i$ is a $d\times d$ orthogonal matrix.
According to Hutchinson \cite{Hutchinson_1981}, there exists a unique \emph{self-similar attractor}, i.e., a unique non-empty compact set $E_{\F} \subset \R^d$ such that
\begin{equation}\label{eq:self-similar-set}
  E_\F = \bigcup_{i=1}^m f_i(E_\F).
\end{equation}
If  the unions on the right-hand side of (\ref{eq:self-similar-set}) are pairwise disjoint, then the IFS $\mathcal{F}$ is said to satisfy the \emph{strong separation condition} (SSC).
In this case, we can define the inverse expanding mapping of $\mathcal{F}$. That is, $T_\F: E_\F \to E_\F$ is given by \[ T_\F|_{f_i(E_\F)} := f_i^{-1}. \]
Together with the Euclidean metric $|\cdot|$, we obtain a metric dynamical system $(E_\F, |\cdot|, T_\F)$.

Given a probability vector $\p=(p_1, \ldots, p_m)$, there exists a unique Borel probability measure $\mu_{\p}$ on $\R^d$, which is called the \emph{self-similar measure}, such that
\[ \mu_{\p} = \sum_{i=1}^{m} p_i \mu_{\p} \circ f_i^{-1}. \]

Associated with a self-similar IFS $\F$, the similarity dimension $s_\F$ is defined to be the unique non-negative root of the equation \[ \sum_{i=1}^{m} r_i^s = 1. \]
If $\F$ satisfies the SSC, then the Hausdorff dimension and the box-counting dimension of $E_\F$ coincide and are both equal to $s_\F$. Furthermore, we have $0< \mathcal{H}^{s_\F}(E_\F) < + \f$.
Here, $\mathcal{H}^s$ denotes the $s$-dimensional Hausdorff measure.
If we further take $\p=(r_1^{s_\F}, r_2^{s_\F}, \ldots, r_m^{s_\F})$, then the corresponding self-similar measure $\mu_{\p}$ coincides with the normalized restriction of $\mathcal{H}^{s_\F}$ to $E_\F$. We refer the reader to \cite{Falconer-2014-book} for an introduction of fractal geometry.

Our first main result establishes that the periodic dimension $\mathfrak{s}(E_\F, |\cdot|, T_\F)$ coincides with the similarity dimension $s_\F$.

\begin{theorem}\label{thm:critical-value}
  Let $\F=\big\{f_i(x)=r_i O_i x+b_i \big\}_{i=1}^m$ be a self-similar IFS on $\R^d$ that satisfies the SSC.
  Then we have  \[\mathfrak{s}(E_\F, |\cdot|, T_\F)=s_\F.\]
  Furthermore, for any sequence of periodic orbits $\{\mathcal O_k\}_{k=1}^\f$ in $(E_\F, |\cdot|, T_\F)$ we have
  \[
  \limsup_{k\to+\f}\#\mathcal O_k\cdot\eta(\mathcal O_k)^{s_\F}<+\f.
  \]
\end{theorem}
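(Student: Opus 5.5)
We work in symbolic coordinates. Under the SSC the coding map $\pi:\{1,\dots,m\}^{\N}\to E_\F$ is a bijection conjugating the left shift to $T_\F$. Set $\delta:=\min_{i\ne j}\operatorname{dist}(f_i(E_\F),f_j(E_\F))>0$ and $D:=\diam E_\F$. For $x=\pi(\ii)$, $y=\pi(\jj)$ whose longest common prefix is the word $w=w_1\cdots w_n$, we have
\[
\delta\, r_w\le |x-y|\le D\, r_w, \qquad r_w:=r_{w_1}\cdots r_{w_n}.
\]
A periodic orbit of least period $p$ is $\{\pi(\sigma^k(u^\infty)):0\le k<p\}$ for a primitive word $u$ of length $p$. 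So $\eta(\mathcal O)$ is comparable, up to the constants $\delta,D$, to $\min r_w$, where $w$ ranges over the longest common prefixes of pairs of distinct cyclic shifts of $u^\infty$.

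For the upper bound $\limsup_k \#\mathcal O_k\,\eta(\mathcal O_k)^{s_\F}<+\f$, take a periodic orbit $\mathcal O$ with $p$ points and write $\eta=\eta(\mathcal O)$. Let $\mathcal W_\eta$ be the cut-set of words $w$ with $r_w\le \eta/(2D)<r_{w^-}$, where $w^-$ is $w$ with its last letter deleted. The cylinders $f_w(E_\F)$, $w\in\mathcal W_\eta$, have diameter $<\eta$, so each contains at most one point of $\mathcal O$. Hence $p\le\#\mathcal W_\eta$. Since $\sum_{w\in\mathcal W_\eta} r_w^{s_\F}=1$ and $r_w\ge r_{\min}\,\eta/(2D)$, we get
\[
\#\mathcal W_\eta\le \bigl(2D/(r_{\min}\eta)\bigr)^{s_\F}.
\]
Therefore $p\,\eta^{s_\F}\le (2D/r_{\min})^{s_\F}$ uniformly over all periodic orbits. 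This gives both the limsup statement and $\mathfrak P^s=0$ for every $s>s_\F$, since $\eta\to0$ as $p\to\infty$ (the set is totally bounded). So $\mathfrak s\le s_\F$.

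For the lower bound, fix $s<s_\F$. We must produce orbits with $p\to\infty$ and $p\,\eta^{s}\to\infty$. It is enough to get $p\,\eta^{s_\F}$ bounded below up to a subexponential factor. The idea is to use de Bruijn-type sequences adapted to the weights. For a scale $\rho$, let $\mathcal W_\rho$ be the cut-set at scale $\rho$ as above. We want a primitive word $u$, of length comparable to $\#\mathcal W_\rho\approx\rho^{-s_\F}$, in which each word of $\mathcal W_\rho$ appears at most once as a cyclic factor of $u^\infty$. Then any two distinct shifts differ within a prefix in $\mathcal W_\rho$, so their common prefix $w$ is shorter and satisfies $r_w\ge\rho$, whence $\eta\ge\delta\rho$. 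Exact weighted de Bruijn sequences are awkward, so the first attempt is the simpler route through uniform lengths. Pass to the $N$-th iterate alphabet, so the words of length $n$ over $\{1,\dots,m\}^N$ all have length $nN$. Restrict to words of length $n$ in a ``typical'' class, namely the letters $i$ whose frequencies are $\approx r_i^{s_\F}$. Entropy counting shows this class has about $e^{n(h-\epsilon)}$ words, each with $r_w\approx e^{-n(h+\epsilon)/s_\F}$, where $h=-\sum r_i^{s_\F}\log r_i^{s_\F}$.

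Now take a cycle in the de Bruijn graph of order $n$ over the full alphabet. An Eulerian or Hamiltonian cycle visits every length-$n$ word exactly once, so $p=m^n$ and $\eta\gtrsim r_{\min}^{n}$. That gives $p\,\eta^s\to\infty$ only for $s<\log m/\log(1/r_{\min})$, which is too weak. This is the main obstacle: we need a long cycle using only length-$n$ windows with large $r_w$. The plan is to find a cycle in the de Bruijn graph visiting each length-$n$ word at most once, with all windows in the typical class. A cleaner alternative is to build $u$ by concatenating distinct blocks from a set $\mathcal B$ of length-$L$ typical words that all begin with a fixed marker word $v$ and have no other occurrence of $v$. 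The marker forces synchronization, so two distinct shifts agree for at most about $2L$ symbols, and their common prefix has $r_w\gtrsim r_{\min}^{|v|}e^{-2L(h+\epsilon)/s_\F}$. The orbit length is $p=L\,\#\mathcal B\approx Le^{L(h-2\epsilon)}$. Hence
\[
p\,\eta^{s}\gtrsim e^{L\bigl((h-2\epsilon)-2s(h+\epsilon)/s_\F\bigr)},
\]
and the factor $2$ must be removed. To do so, let the shared-prefix bound come from a single block plus a marker. Since the blocks are distinct and arranged in one fixed cyclic order, two shifts agreeing on a full block starting at a marker occupy the same position in the concatenation, so they agree on at most about $L+|v|$ symbols. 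This yields exponent $(h-2\epsilon)-s(h+\epsilon)/s_\F>0$ for $\epsilon$ small, because $s<s_\F$. Primitivity of $u$ follows from the distinctness of the blocks. This gives $\mathfrak P^s=+\f$ for all $s<s_\F$, so $\mathfrak s=s_\F$.
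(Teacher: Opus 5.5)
Your upper bound is correct and is a different route from the paper's. The paper packs the disjoint balls $B(y,\eta/2)$, $y\in\mathcal O$, and uses Ahlfors regularity of $\mathcal H^{s_\F}|_{E_\F}$ (Lemma~\ref{lem:finite-P}). You instead cover $E_\F$ by cut-set cylinders and use $\sum_{w\in\mathcal W_\eta}r_w^{s_\F}=1$. Both give a uniform bound on $\#\mathcal O\cdot\eta(\mathcal O)^{s_\F}$. The lower bound, however, has a genuine gap, exactly at the step where you remove the factor $2$.

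In your concatenation $u=B_1B_2\cdots B_M$ of distinct marked blocks, take $k_1\ne k_2$ at the \emph{same} offset $\tau$ inside two different blocks $B_a\ne B_b$. The common prefix of $\si^{k_1}(u^\f)$ and $\si^{k_2}(u^\f)$ can be a common tail of $B_a,B_b$ (length up to $L-|v|-1$) followed by a common \emph{proper} prefix of $B_{a+1}\ne B_{b+1}$ (length up to $L-1$). Distinctness of blocks only forbids sharing a full block \emph{after} the first synchronizing marker. It says nothing about the initial partial block. Your bound $L+|v|$ holds only when the two offsets differ. For equal offsets the common prefix can have length close to $2L$. The only available estimate is then $r_w\ge r_{B_a}r_{B_{a+1}}\approx e^{-2L(h+\ep)/s_\F}$, which is exactly the bound you were trying to improve. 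Nothing in the construction prevents this. Heuristically it does occur: with about $p^2$ pairs of positions, a generic ordering of about $e^{Lh}$ blocks produces shared windows of length about $2\log p/h$ (a birthday-paradox effect). So as written the construction only yields roughly $\mathfrak s\ge s_\F/2$.

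What is missing is a de Bruijn structure that makes \emph{every} window of the relevant scale occur once. The block length must stay \emph{fixed} while the window length tends to infinity, so that the misalignment overhead is $O(1)$ rather than comparable to $\log p$. The paper does this in three steps:
\begin{enumerate}[(1)]
\item Fix $N$ and a single letter-type $(\ell_1^*,\dots,\ell_m^*)$ with $\frac{N!}{\ell_1^*!\cdots\ell_m^*!}\prod_i r_i^{s_\F\ell_i^*}\ge(N+1)^{-m}$. The $q$ words of that type form a \emph{homogeneous} sub-IFS $\mathcal G$ of dimension $t>s_\F-\ep$.
\item Since all $\mathcal G$-letters share the ratio $r_{\mathcal G}$, a plain de Bruijn sequence of order $n$ over this $q$-letter alphabet (Proposition~\ref{prop:homogeneous}) has every aligned $n$-window occurring exactly once, each with ratio exactly $r_{\mathcal G}^n$.
\item Inserting the marker $1^{2N-1}m$ after every $n$ letters of $\mathcal G$ forces alignment in the original alphabet.
\end{enumerate}
This gives $\#\mathcal O\ge Nq^n$ and $\eta\ge r_1^{4N}r_{\mathcal G}^{n+1}$, hence $\#\mathcal O\cdot\eta^t\ge N(r_1^{4N}r_{\mathcal G})^t>0$ uniformly in $n$.

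Your ``typical class'' cannot play this role for two reasons. It controls ratios only up to factors $e^{\pm n\ep}$. Also, a de Bruijn cycle over the original alphabet has non-typical windows, which is why your first plan stalled. Using $N$-blocks of one fixed type as a new alphabet removes both problems at once.
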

\begin{remark}\label{rem:critical-value}
 Theorem \ref{thm:critical-value}, together with Theorem  \ref{thm:P-s} (see below), implies that (i) if $s>s_\F$ then for any sequence of periodic orbits $\{\mathcal O_k\}_{k=1}^\f$ in $(E_\F, |\cdot|, T_\F)$ with $\#\mathcal O_k \to +\f$ we have
 \[
 \lim_{k\to+\f}\#\mathcal O_k\cdot\eta(\mathcal O_k)^s=0;
 \]
 (ii) if $s<s_\F$ then we can find a sequence of periodic orbits $\{\mathcal O_k\}_{k=1}^\f$ in $(E_\F, |\cdot|, T_\F)$ such that
 \[
 \lim_{k\to+\f}\#\mathcal O_k\cdot\eta(\mathcal O_k)^s=+\f.
 \]
 \end{remark}

Our second main result concerns the weak limits of periodic measures.
Let $\delta_a$ denote the Dirac measure supported on a point $a \in \R^d$.

\begin{theorem}\label{thm:periodic-measure}
  Suppose that $\F=\big\{f_i(x)=r_i O_i x+b_i \big\}_{i=1}^m$ is a self-similar IFS on $\R^d$ that satisfies the SSC.
  In the metric dynamical system $(E_{\F}, |\cdot|, T_{\F})$, let $\{x_n\}_{n=1}^\f$ be a sequence of periodic points with $\#\mathcal{O}_{x_n} \to +\f$ as $n \to +\f$.
  Define \[ \mu_n := \frac{1}{\#\mathcal{O}_{x_n}} \sum_{y \in \mathcal{O}_{x_n}} \delta_y. \]
  If
  \begin{equation}\label{eq:periodic-point-s-F}
    \liminf_{n \to +\f} \#\mathcal{O}_{x_n} \cdot   \eta(\mathcal{O}_{x_n})^{s_\F} > 0,
  \end{equation}
  then the sequence $\{\mu_n\}_{n=1}^\f$ converges weakly to $\mu_{\p}$ for $\p=(r_1^{s_\F}, r_2^{s_\F}, \ldots, r_m^{s_\F})$.
\end{theorem}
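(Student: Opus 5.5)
Under the SSC the coding map is a bi-Lipschitz-type conjugacy: let $\delta>0$ be the minimal distance between distinct first-level pieces $f_i(E_\F)$. For words $\ii,\jj$ whose longest common prefix is $\omega=\omega_1\cdots\omega_k$, the corresponding points satisfy
\[
c\, r_\omega \le |\pi(\ii)-\pi(\jj)| \le \diam(E_\F)\, r_\omega,
\]
where $r_\omega=r_{\omega_1}\cdots r_{\omega_k}$ and $c=\delta$. Weak convergence to $\mu_\p$ follows once we show that for every finite word $\omega$,
\[
\mu_n\bigl(f_\omega(E_\F)\bigr)\to r_\omega^{s_\F}=\mu_\p\bigl(f_\omega(E_\F)\bigr).
\]
The cylinders $f_\omega(E_\F)$ are clopen in $E_\F$ and form a basis of the topology. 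So convergence on all cylinders gives convergence of integrals of continuous functions, via uniform approximation by locally constant functions.

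Fix $\omega$ of length $k$ and write $N=\#\mathcal O_{x_n}$ and $\eta=\eta(\mathcal O_{x_n})$. By (\ref{eq:periodic-point-s-F}) there is $a>0$ with $N\eta^{s_\F}\ge a$ for large $n$. Distinct orbit points lying in a common cylinder $f_\tau(E_\F)$ are at distance at most $\diam(E_\F) r_\tau$. Hence each cylinder $f_\tau(E_\F)$ with $\diam(E_\F)r_\tau<\eta$ contains at most one orbit point.

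Now take the Moran cut $\Lambda_\rho=\{\tau: r_\tau<\rho\le r_{\tau^-}\}$ with $\rho=\eta/\diam(E_\F)$. Every $\tau\in\Lambda_\rho$ satisfies $r_\tau\ge r_{\min}\rho$. Since $\sum_{\tau\in\Lambda_\rho}r_\tau^{s_\F}=1$, we get $\#\Lambda_\rho\le (r_{\min}\rho)^{-s_\F}$. Restricting the cut to descendants of $\omega$, the number of orbit points in $f_\omega(E_\F)$ is at most
\[
\#\{\tau\in\Lambda_\rho:\tau \text{ extends } \omega\}\le r_\omega^{s_\F}(r_{\min}\rho)^{-s_\F}.
\]
Dividing by $N$ and using $N\ge a\eta^{-s_\F}$ yields the upper bound
\[
\mu_n\bigl(f_\omega(E_\F)\bigr)\le C\, r_\omega^{s_\F},
\qquad C=\diam(E_\F)^{s_\F}r_{\min}^{-s_\F}/a.
\]
This already shows that every weak limit $\nu$ satisfies $\nu\le C\mu_\p$. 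In particular $\nu$ is absolutely continuous with respect to $\mu_\p$.

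Next, every weak limit $\nu$ of $\mu_n$ is $T_\F$-invariant. Indeed each $\mu_n$ is invariant, and $T_\F$ is continuous on $E_\F$ because the pieces are clopen, so invariance passes to weak limits. Since $E_\F$ is compact, weak limits exist along subsequences. The measure $\mu_\p$ is Bernoulli under the coding, hence ergodic for $T_\F$. An invariant probability measure that is absolutely continuous with respect to an ergodic invariant measure must equal it: its density $h$ satisfies $h\circ T_\F=h$ a.e., which follows from the standard argument via the Radon--Nikodym derivative of an invariant measure, and ergodicity then forces $h$ to be constant. Therefore every subsequential limit equals $\mu_\p$, and the whole sequence converges.

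The main obstacle is the uniform upper bound $\mu_n(f_\omega(E_\F))\le C r_\omega^{s_\F}$, which must hold for all $\omega$ simultaneously, including long $\omega$ with $r_\omega$ much smaller than $\eta$. For such $\omega$, the cylinder $f_\omega(E_\F)$ contains at most one point, so $\mu_n(f_\omega(E_\F))\le 1/N$, which need not be $\le C r_\omega^{s_\F}$. This does not spoil the argument: for a fixed $\omega$, eventually $\eta\to0$ (because $N\to\infty$ and $N\le\#\Lambda_\rho\lesssim\eta^{-s_\F}$) puts $\omega$ in the good regime. Even without this, the limit bound $\nu(f_\omega(E_\F))\le C r_\omega^{s_\F}$ holds for each fixed $\omega$. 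The cylinders are clopen, so weak convergence gives $\nu(f_\omega(E_\F))=\lim_n\mu_n(f_\omega(E_\F))$ along the subsequence. This bound on all cylinders is exactly what yields $\nu\ll\mu_\p$.
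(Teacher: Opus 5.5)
Your argument is correct and follows the paper's proof. You bound $\mu_n(f_\omega(E_\F))\le C\,r_\omega^{s_\F}$ for each fixed cylinder once $\eta$ is small, pass to the limit on clopen cylinders to get $\nu\le C\mu_\p$, and use invariance of weak limits plus ergodicity to force $\nu=\mu_\p$; the only difference is cosmetic. You count orbit points by covering $f_\omega(E_\F)$ with a Moran cut at scale $\eta/\diam(E_\F)$, each piece holding at most one point, whereas the paper packs disjoint $\eta_n/2$-balls inside the cylinder in $\Sigma$ and uses $\nu(B(\ii,\rho))\ge r^{s}\rho^{s}$.
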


\begin{remark}
  (a) We point out that the condition in (\ref{eq:periodic-point-s-F}) is only a sufficient condition.  This means that  there exists a sequence of periodic points $\{x_n\}_{n=1}^\f$ such that
  \[ \lim_{n\to +\f} \# \mathcal{O}_{x_n} = +\f \quad \text{and}\quad \lim_{n \to +\f} \mu_n = \mu_{\p}, \]
  but \[ \lim_{n \to+\f} \#\mathcal{O}_{x_n} \cdot  \eta(\mathcal{O}_{x_n})^{s_\F} = 0. \]
  See Example \ref{example-1} for more details.

  (b) When the self-similar IFS $\mathcal{F}$ is homogeneous, i.e., $r_i = r$ for all $1 \le i \le m$, our proof of Theorem \ref{thm:critical-value} demonstrates that there indeed exists  a sequence of periodic points satisfying the condition in (\ref{eq:periodic-point-s-F}). However, if the IFS $\mathcal F$ is inhomogeneous, it remains unknown whether such a sequence exists. See Section \ref{sec:remarks} for further discussions.
\end{remark}

The rest of the paper is organized as follows.
In Section \ref{sec:property}, we present some properties of periodic content and dimension.
We will prove Theorem \ref{thm:critical-value} and Theorem \ref{thm:periodic-measure} in Section \ref{sec:self-similar-IFS} and Section \ref{sec:self-similar-measure}, respectively.
Finally, we give some remarks in Section \ref{sec:remarks}.

\section{Properties of periodic content and dimension}\label{sec:property}

In this section, let $(X,d,T)$ be a given metric dynamical system with $(X, d)$ a totally bounded metric space, and we will give some properties of periodic content $\mathfrak{P}^s(X,d,T)$ and periodic dimension $\mathfrak{s}(X, d,T)$ as defined in (\ref{eq:P-def}) and (\ref{eq:critical-value}), respectively.
We first establish the existence of periodic dimension.

\begin{lemma}\label{lem:critical-value}
  Let $s \ge 0$. If $\mathfrak{P}^s(X,d,T)< + \f$, then for any $t > s$, we have $\mathfrak{P}^t(X,d,T) =0$.
\end{lemma}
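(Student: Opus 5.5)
The plan is to write $\#\mathcal O_x\cdot\eta(\mathcal O_x)^t=\#\mathcal O_x\cdot\eta(\mathcal O_x)^s\cdot\eta(\mathcal O_x)^{t-s}$. The first factor is bounded by the finiteness of $\mathfrak P^s$, so it suffices to show that $\eta(\mathcal O_x)\to0$ uniformly as the least period grows. This uniform shrinking is where total boundedness of $(X,d)$ enters.

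\emph{Step 1 (uniform shrinking of gaps).} Fix $\ep>0$. By total boundedness choose $x_1,\dots,x_N$ with $X=\bigcup_{j=1}^N B(x_j,\ep/2)$. If $x$ is periodic with $\#\mathcal O_x\ge N+1$, then by pigeonhole two distinct points $y\ne z$ of $\mathcal O_x$ lie in the same ball $B(x_j,\ep/2)$. Hence $d(y,z)<\ep$, and so $\eta(\mathcal O_x)<\ep$. In other words, there is $n_\ep$ such that $\#\mathcal O_x\ge n_\ep$ implies $\eta(\mathcal O_x)<\ep$. Note also that if $\#\mathcal O_x\ge2$ then $\eta(\mathcal O_x)$ is a minimum over finitely many positive distances, so it is finite.

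\emph{Step 2 (comparison).} Since $\mathfrak P^s<+\f$ and $\mathfrak P^s_n$ decreases to it, there are $n_0$ and $M<+\f$ with $\mathfrak P^s_{n_0}\le M$. So every periodic $x$ with $\#\mathcal O_x\ge n_0$ satisfies $\#\mathcal O_x\cdot\eta(\mathcal O_x)^s\le M$. Given $\ep\in(0,1)$, take $n\ge\max\{n_0,n_\ep,2\}$. Then for every periodic $x$ with $\#\mathcal O_x\ge n$,
\[
\#\mathcal O_x\cdot\eta(\mathcal O_x)^t=\#\mathcal O_x\cdot\eta(\mathcal O_x)^s\cdot\eta(\mathcal O_x)^{t-s}\le M\ep^{t-s}.
\]
Taking the supremum gives $\mathfrak P^t_n\le M\ep^{t-s}$. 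This bound also covers the convention $\sup\emptyset=0$ when no such orbits exist. Letting $n\to+\f$ and then $\ep\to0$ yields $\mathfrak P^t(X,d,T)=0$.

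The only potential obstacle is Step 1, the uniformity of $\eta\to0$ in terms of the period. The pigeonhole argument handles this cleanly using only total boundedness. Care is needed only for the edge cases $s=0$ (where $\eta^0=1$, so the argument is unchanged) and empty suprema.
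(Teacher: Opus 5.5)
Your proof is correct and follows essentially the same route as the paper: a pigeonhole argument on a finite $\ep/2$-cover gives $\eta(\mathcal O_x)\le\ep$ once the least period exceeds $N$. The comparison $\#\mathcal O_x\,\eta(\mathcal O_x)^t\le\ep^{t-s}\,\#\mathcal O_x\,\eta(\mathcal O_x)^s$ then yields $\mathfrak P^t\le\ep^{t-s}\mathfrak P^s$ (you route this through a finite bound $M$ on $\mathfrak P^s_{n_0}$, which is equivalent), and $\ep\to0$ finishes.
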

\begin{proof}
  Fix $\ep > 0$.
  Since $X$ is totally bounded, we can find finitely many points $x_1, x_2, \ldots, x_N \in X$ such that
  \[ X = \bigcup_{j=1}^N B(x_j, \ep/2). \]
  For $n > N$, if $x$ is a periodic point with $\# \mathcal{O}_x \ge n$, then by the pigeonhole principle there exists $1\le j \le N$ such that \[ \# \big( B(x_j, \ep/2) \cap \mathcal{O}_x \big) \ge 2, \]
  which implies that $\eta(\mathcal{O}_x) \le \ep$. This means that for $t>s$,
  \[ \# \mathcal{O}_x \cdot  \eta( \mathcal{O}_x )^t \le \ep^{t-s} \cdot \# \mathcal{O}_x \cdot  \eta( \mathcal{O}_x )^s. \]
  Thus, by (\ref{eq:P-n}) we obtain that for any $n > N$,
  \[ \mathfrak{P}_n^t(X,d,T) \le \ep^{t-s} \cdot \mathfrak{P}_n^s(X,d,T). \]
  Letting $n \to +\f$, it follows by (\ref{eq:P-def}) that \[ \mathfrak{P}^t(X,d,T) \le \ep^{t-s} \cdot \mathfrak{P}^s(X,d,T). \]
  Since $\ep>0$ is arbitrary and $\mathfrak{P}^s(X,d,T)<+\f$, we conclude that $\mathfrak{P}^t(X,d,T) =0$.
\end{proof}
By Lemma \ref{lem:critical-value} it follows that the periodic dimension $\mathfrak{s}(X,d,T)$ exists, which is given by
\begin{align*}
 \mathfrak{s}(X,d,T)&= \inf\big\{ s \ge 0: \mathfrak{P}^s(X,d,T) =0 \big\} = \sup \big\{ s \ge 0: \mathfrak{P}^s(X,d,T) = +\f \big\}\\
    &=\inf\big\{ s \ge 0: \mathfrak{P}^s(X,d,T) < + \f \big\} = \sup \big\{ s \ge 0: \mathfrak{P}^s(X,d,T) >0 \big\}.
\end{align*}
Let $\Gamma(X)$ denote the set of all sequences $\set{x_n}\subset X$ of periodic points with $\# \mathcal{O}_{x_n} \to +\f$ as $n \to +\f$.
Next, we give a lower bound on periodic content $\mathfrak{P}^s(X,d,T)$.

\begin{lemma}\label{lem:lower-bound-P-s}
  Let $s\ge 0$. For any sequence $\{x_n\} \in \Gamma(X)$, we have
  \[ \mathfrak{P}^s(X,d,T) \ge \limsup_{n \to +\f} \# \mathcal{O}_{x_n} \cdot   \eta( \mathcal{O}_{x_n} )^s. \]
\end{lemma}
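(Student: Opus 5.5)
The argument runs straight from the definitions (\ref{eq:P-n}) and (\ref{eq:P-def}), using only that $\mathfrak{P}_n^s(X,d,T)$ is a supremum over a set of periodic points that shrinks as $n$ grows. Fix a sequence $\{x_n\}\in\Gamma(X)$ and fix $N\in\N$. Since $\#\mathcal{O}_{x_n}\to+\f$, there is $n_0$ such that $\#\mathcal{O}_{x_n}\ge N$ for all $n\ge n_0$.

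For each such $n$, the point $x_n$ is a periodic point with $\#\mathcal{O}_{x_n}\ge N$. It is therefore admissible in the supremum defining $\mathfrak{P}_N^s(X,d,T)$, so
\[
\#\mathcal{O}_{x_n}\cdot\eta(\mathcal{O}_{x_n})^s\le \mathfrak{P}_N^s(X,d,T)\qquad (n\ge n_0).
\]
Taking $\limsup_{n\to+\f}$ on the left, which depends only on the tail $n\ge n_0$, gives
\[
\limsup_{n\to+\f}\#\mathcal{O}_{x_n}\cdot\eta(\mathcal{O}_{x_n})^s\le \mathfrak{P}_N^s(X,d,T)\quad\text{for every } N\in\N.
\]

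Finally, let $N\to+\f$. Since $\mathfrak{P}_N^s(X,d,T)$ is decreasing in $N$, its limit is $\mathfrak{P}^s(X,d,T)$, and the inequality passes to the limit in $[0,+\f]$.

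There is no genuine obstacle here. The only care needed is that the quantities may take the value $+\f$. The convention $\inf\emptyset=+\f$ for $\eta$ never arises, because each orbit $\mathcal{O}_{x_n}$ with $n\ge n_0$ has at least two points once $N\ge 2$, so $\eta(\mathcal{O}_{x_n})$ is finite and all the products are well-defined elements of $[0,+\f]$.
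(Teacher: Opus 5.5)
Your proposal is correct and follows essentially the same route as the paper: fix the index, note that $x_n$ is admissible in the supremum defining $\mathfrak{P}_N^s(X,d,T)$ for all large $n$, take $\limsup$ in $n$, and then let $N\to+\infty$. Your extra remark that $\eta(\mathcal{O}_{x_n})$ is finite on the tail is a harmless added detail that the paper leaves implicit.
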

\begin{proof}
  Fix $k \in \N$, and take $\set{x_n}\in\Gamma(X)$. Since $\# \mathcal{O}_{x_n} \to +\f$ as $n\to+\f$, there exists $n_0\in \N$ such that $\# \mathcal{O}_{x_n} > k$ for all $n \ge n_0$. By (\ref{eq:P-n}) it follows that
  \[ \mathfrak{P}_k^s(X,d,T) \ge \# \mathcal{O}_{x_n} \cdot  \eta( \mathcal{O}_{x_n} )^s  \qquad \forall n \ge n_0, \]
  which implies that \[ \mathfrak{P}_k^s(X,d,T) \ge \limsup_{n \to +\f} \# \mathcal{O}_{x_n} \cdot \eta( \mathcal{O}_{x_n} )^s. \]
  This completes the proof by letting $k \to +\f$.
\end{proof}

The following result provides a characterization of periodic content $\mathfrak{P}^s(X,d,T)$, which can be used to obtain the lower and upper bounds for periodic dimension $\mathfrak{s}(X,d,T)$.

\begin{theorem}\label{thm:P-s}
  Let $s\ge 0$. The following statements on $\mathfrak{P}^s(X,d,T)$ hold.
  \begin{enumerate}[{\rm(i)}]
    \item $\mathfrak{P}^s(X,d,T)>0$ if and only if there exists a sequence $\{x_n\} \in \Gamma(X)$, \[\liminf_{n \to +\f} \# \mathcal{O}_{x_n} \cdot \eta( \mathcal{O}_{x_n} )^s >0. \]
    \item $\mathfrak{P}^s(X,d,T)<+\f$ if and only if for any sequence $\{x_n\} \in \Gamma(X)$,  \[ \limsup_{n \to +\f} \# \mathcal{O}_{x_n} \cdot \eta( \mathcal{O}_{x_n} )^s <+\f. \]
  \end{enumerate}
\end{theorem}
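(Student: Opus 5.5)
Both parts follow from Lemma \ref{lem:lower-bound-P-s} in one direction and from a diagonal extraction of near-maximizers of $\mathfrak{P}_n^s(X,d,T)$ in the other. The extraction uses only the definition (\ref{eq:P-n}) and the monotonicity of $\mathfrak{P}_n^s(X,d,T)$ in $n$.

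For (i), the ``if'' direction comes first. Given $\{x_n\}\in\Gamma(X)$ with $\liminf_n \#\mathcal{O}_{x_n}\eta(\mathcal{O}_{x_n})^s=c>0$, the $\limsup$ is at least $c$. Lemma \ref{lem:lower-bound-P-s} then yields $\mathfrak{P}^s(X,d,T)\ge c>0$.

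For the ``only if'' direction, suppose $\mathfrak{P}^s(X,d,T)=L>0$, where $L$ may be $+\infty$. Set $c:=\min\{L/2,1\}>0$. Since $\mathfrak{P}_n^s\ge L>c$ for every $n$, the definition of the supremum gives, for each $n$, a periodic point $x_n$ with $\#\mathcal{O}_{x_n}\ge n$ and $\#\mathcal{O}_{x_n}\eta(\mathcal{O}_{x_n})^s>c$. Then $\#\mathcal{O}_{x_n}\to+\infty$, so $\{x_n\}\in\Gamma(X)$, and the $\liminf$ is at least $c>0$. One small point needs checking: when $\#\mathcal{O}_x\ge 2$ we have $\eta(\mathcal{O}_x)<+\infty$, so no convention issue arises. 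Only $n\ge 2$ matters for the construction.

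For (ii), the ``only if'' direction is again Lemma \ref{lem:lower-bound-P-s}. If $\mathfrak{P}^s(X,d,T)<+\infty$, then every $\limsup$ is bounded by $\mathfrak{P}^s(X,d,T)$ and is therefore finite. For the converse, argue by contrapositive and suppose $\mathfrak{P}^s(X,d,T)=+\infty$. Then $\mathfrak{P}_n^s=+\infty$ for all $n$, so for each $n$ we may choose a periodic $x_n$ with $\#\mathcal{O}_{x_n}\ge n$ and $\#\mathcal{O}_{x_n}\eta(\mathcal{O}_{x_n})^s>n$. This sequence lies in $\Gamma(X)$ and its quantities tend to $+\infty$, so its $\limsup$ is $+\infty$, which is the desired contradiction.

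No step is a genuine obstacle. The only care needed is the choice of the threshold in (i), namely $\min\{L/2,1\}$, which handles the case $L=+\infty$, and the observation that $\mathfrak{P}_n^s\ge \mathfrak{P}^s$ for every $n$ because the sequence decreases to its limit.
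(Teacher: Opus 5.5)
Your proposal is correct and follows the paper's proof essentially verbatim. Lemma \ref{lem:lower-bound-P-s} gives sufficiency in (i) and necessity in (ii), and the other two directions pick near-maximizers of $\mathfrak{P}_n^s(X,d,T)$ with $\#\mathcal{O}_{x_n}\ge n$. The only cosmetic difference is that you use monotonicity to get $\mathfrak{P}_n^s(X,d,T)\ge \mathfrak{P}^s(X,d,T)$ for every $n$ with threshold $\min\{L/2,1\}$, whereas the paper fixes $\ep_0>0$ and $n_0$ with $\mathfrak{P}_n^s(X,d,T)>\ep_0$ for all $n\ge n_0$.
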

\begin{proof}
  (i) The sufficiency follows directly from Lemma \ref{lem:lower-bound-P-s}.
  For the necessity, we assume that $\mathfrak{P}^s(X,d,T)>0$.
  Then by (\ref{eq:P-def}) there exist $\ep_0>0$ and $n_0 \in \N$ such that $\mathfrak{P}_n^s(X,d,T)> \ep_0$ for all $n \ge n_0$.
  For any $n \ge n_0$, by (\ref{eq:P-n}) we can choose a periodic point $x_n$ such that
  \[ \# \mathcal{O}_{x_n}\ge n \quad \text{and} \quad  \# \mathcal{O}_{x_n} \cdot  \eta( \mathcal{O}_{x_n} )^s \ge \ep_0. \]
  Thus, we obtain that $\{x_n\} \in \Gamma(X)$ and \[ \liminf_{n \to +\f} \# \mathcal{O}_{x_n} \cdot   \eta( \mathcal{O}_{x_n} )  ^s \ge \ep_0, \]
  as desired.

  (ii) The necessity follows directly from Lemma \ref{lem:lower-bound-P-s}.
  Next, we prove the sufficiency.
  Assume on the contrary that $\mathfrak{P}^s(X,d,T)=+\f$.
  Note that $\mathfrak{P}_n^s(X,d,T)$ is decreasing as $n \to +\f$.
  So we have $\mathfrak{P}_n^s(X,d,T)=+\f$ for all $n \in \N$.
  For any $n \in \N$, by (\ref{eq:P-n}) we can choose a periodic point $x_n$ such that
  \[ \# \mathcal{O}_{x_n}\ge n \quad \text{and} \quad  \# \mathcal{O}_{x_n} \cdot  \eta( \mathcal{O}_{x_n} )  ^s \ge n. \]
  We clearly have $\{x_n\} \in \Gamma(X)$, but
  \[ \lim_{n \to +\f} \# \mathcal{O}_{x_n} \cdot   \eta( \mathcal{O}_{x_n} )  ^s = +\f, \]
  leading to a contradiction.
  This completes the proof.
\end{proof}


Next, we show that the periodic dimension is invariant under the bi-Lipschitz conjugate.
Let $(X,d_X)$ and $(Y,d_Y)$ be two metric spaces.
A mapping $\phi: X \to Y$ is said to be \emph{Lipschitz} if there exists a constant $C>0$, which is called a \emph{Lipschitz constant} of $\phi$, such that
\[ d_Y\big( \phi(x_1), \phi(x_2) \big) \le C \cdot d_X(x_1,x_2) \qquad \forall x_1,x_2 \in X.  \]
More generally, for $\alpha>0$, a mapping $\phi: X \to Y$ is said to be \emph{$\alpha$-H\"{o}lder} if there exists a constant $C>0$ such that
\[ d_Y\big( \phi(x_1), \phi(x_2) \big) \le C \cdot \big( d_X(x_1,x_2) \big)^{\alpha} \qquad \forall x_1,x_2 \in X.  \]
Clearly, every Lipschitz mapping is $1$-H\"{o}lder.
If $\phi:X \to Y$ is a bijective mapping and both $\phi$ and $\phi^{-1}$ are Lipschitz, then we say that $\phi$ is \emph{bi-Lipschitz}.

\begin{proposition}
  Let $(X,d_X, T)$ and $(Y, d_Y, S)$ be two metric dynamical systems. If there exists an $\alpha$-H\"{o}lder mapping $\phi: X \to Y$ such that $\phi\circ T = S \circ \phi$ and $\# \phi^{-1}(\{y\}) < +\f$ for any $y \in Y$,
  then we have
  \[\mathfrak{s}(\phi(X),d_Y,S) \le \frac{\mathfrak{s}(X,d_X,T)}{\alpha}.\]
\end{proposition}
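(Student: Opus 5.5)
We argue directly with the periodic content. Fix $s>\mathfrak{s}(X,d_X,T)$ and put $t:=s/\alpha$. It suffices to show that $\mathfrak{P}^{t}(\phi(X),d_Y,S)<+\f$, since then $\mathfrak{s}(\phi(X),d_Y,S)\le t$, and letting $s\downarrow \mathfrak{s}(X,d_X,T)$ gives the claim. Note that $S$ maps $\phi(X)$ into itself, because $S(\phi(x))=\phi(Tx)$. Also $\phi(X)$ is totally bounded, being the Hölder image of a totally bounded space, so the framework of Section \ref{sec:property} applies. By Theorem \ref{thm:P-s}(ii), we must show that for every $\{y_n\}\in\Gamma(\phi(X))$ we have $\limsup_n \#\mathcal O_{y_n}\,\eta(\mathcal O_{y_n})^{t}<+\f$.

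The key step is to lift periodic points. Let $y\in\phi(X)$ be $S$-periodic with least period $p$, and let $F:=\phi^{-1}(\{y\})$, which is a finite nonempty set. Since $\phi\circ T^p=S^p\circ\phi$, the map $T^p$ sends $F$ into $F$. Because $F$ is finite, iterating $T^p$ from any point of $F$ eventually cycles, so some $x\in F$ satisfies $T^{pj}x=x$ for some $j\ge1$. Thus $x$ is $T$-periodic with $\phi(x)=y$. Moreover $\phi$ maps $\mathcal O_x$ onto $\mathcal O_y$, because $\phi(T^k x)=S^k y$, and hence $\#\mathcal O_x\ge \#\mathcal O_y$.

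Next we compare the gaps. Write $q=\#\mathcal O_x$, a multiple of $p$. The points $T^k x$ and $T^{k+p}x$ are distinct whenever $q>p$, but they have the same image under $\phi$, so the Hölder bound says nothing about them. This is the main obstacle: the lifted orbit $\mathcal O_x$ may be much longer than $\mathcal O_y$ and may have a much smaller gap. A naive estimate would use $\eta(\mathcal O_y)\le C\,\eta(\mathcal O_x)^\alpha$, which holds only if the closest pair of $\mathcal O_x$ has distinct images. I do not yet see how to make such a comparison work.

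Instead I would avoid comparing gaps and argue by covering numbers, mirroring the proof of Lemma \ref{lem:critical-value}. Let $\rho:=\eta(\mathcal O_y)$. The $p$ points of $\mathcal O_y$ are $\rho$-separated. Each has a preimage in $\mathcal O_x$, and preimages of distinct points of $\mathcal O_y$ lie at $d_X$-distance at least $(\rho/C)^{1/\alpha}$ from each other. Hence $\mathcal O_x$ contains a subset $A$ with $\#A=p$ whose points are $\delta$-separated, where $\delta=(\rho/C)^{1/\alpha}$. Since $A$ is a subset of a single periodic orbit rather than a whole orbit, this does not directly bound $\mathfrak P^s(X)$. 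Still, because the bound $\mathfrak P^s(X)<\f$ came from total boundedness, the natural route is to work with covering numbers: we need $p\,\delta^{s}$ bounded, i.e. $p\le K\delta^{-s}$. I would try to derive this from $\mathfrak P^s(X)<+\f$ by finding a periodic orbit of $T$ in $X$ whose gap is comparable to $\delta$ and whose length is at least $p$. The candidate is $\mathcal O_x$ restricted, via the finiteness of the fibres, to points lying over distinct points of $\mathcal O_y$. I expect this lifting-with-gap-control step to be the crux of the argument; everything else is routine bookkeeping with Theorem \ref{thm:P-s}.
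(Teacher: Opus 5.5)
As you acknowledge, your proposal stops short of a proof. The step you call the crux is producing, from a long $S$-orbit $\mathcal O_y$ with gap $\rho$, a $T$-periodic orbit with at least $\#\mathcal O_y$ points and gap $\gtrsim(\rho/C)^{1/\alpha}$. You leave it open, and it cannot be supplied. Your diagnosis of the obstacle is right, and it is exactly where the paper's own argument is defective. After lifting $y$ to a periodic $x$ with $\phi(\mathcal O_x)=\mathcal O_y$, the paper asserts $\eta(\mathcal O_y)\le C\,\eta(\mathcal O_x)^\alpha$. That only follows when some closest pair of $\mathcal O_x$ has distinct images under $\phi$. This is automatic when $\phi$ is injective, so Corollary~\ref{cor:bi-Lipschitz} and the main theorems are unaffected.

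As stated, the proposition is false. Let $Y=\{0,1\}^{\N}$ with $d_Y(\ii,\jj)=2^{-|\ii\wedge\jj|}$ and $S=\sigma$. The de Bruijn orbits of Proposition~\ref{prop:homogeneous} have $2^n$ points and gap $2^{1-n}$, so $\mathfrak s(Y,d_Y,\sigma)\ge 1$ by Theorem~\ref{thm:P-s}(i). Let $X=Y\times\{0,1\}$. Set $\varepsilon(y)=2^{-p^2}$ if $y$ is $\sigma$-periodic with least period $p$, and $\varepsilon(y)=1$ otherwise. Let $d_X((y,a),(y',b))=d_Y(y,y')+|a\varepsilon(y)-b\varepsilon(y')|$. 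This is the metric pulled back from the compact space $Y\times[0,1]$ by the injection $(y,a)\mapsto(y,a\varepsilon(y))$, so $X$ is totally bounded. Choose $R\subset Y$ meeting every periodic $\sigma$-orbit in exactly one point, and put $T(y,a)=(\sigma y,\,a+\mathbf 1_R(y)\bmod 2)$. This $T$ is discontinuous, but the paper imposes no continuity on $T$. Then $\phi(y,a)=y$ is $1$-Lipschitz, two-to-one and onto, with $\phi\circ T=\sigma\circ\phi$. However, every $T$-periodic orbit equals $\mathcal O_y\times\{0,1\}$ for some periodic $y$ of least period $p$. So it has $2p$ points and contains $(y,0)$ and $(y,1)$, which are at distance $2^{-p^2}$. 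Hence $\mathfrak P^s(X,d_X,T)\le\lim_{n}\sup_{2p\ge n}2p\,2^{-sp^2}=0$ for every $s>0$. That is, $\mathfrak s(X,d_X,T)=0<1\le\mathfrak s(\phi(X),d_Y,\sigma)$.

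What your $\delta$-separated subset $A\subset\mathcal O_x$ does give is $\#\mathcal O_y\le\mathcal N_{\delta/2}(X)$ with $\delta=(\rho/C)^{1/\alpha}$. This yields only the weaker, true bound $\mathfrak s(\phi(X),d_Y,S)\le\overline{\dim}_{\mathrm B}X/\alpha$, in the spirit of Lemma~\ref{lem:upper-bound}. Comparing with $\mathfrak s(X,d_X,T)$ needs a hypothesis that prevents fibres from collapsing. Examples are $\phi$ injective, or $\delta_0:=\inf\{d_X(x,x'):x\ne x',\ \phi(x)=\phi(x')\}>0$. In the latter case the pigeonhole argument of Lemma~\ref{lem:critical-value} gives $\eta(\mathcal O_x)<\delta_0$ once $\#\mathcal O_x$ is large, so a closest pair of $\mathcal O_x$ lies in distinct fibres. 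The naive estimate then holds. Together with $\#\mathcal O_y\le\#\mathcal O_x$, it gives $\mathfrak P^{s/\alpha}_n(\phi(X),d_Y,S)\le C^{s/\alpha}\mathfrak P^s_n(X,d_X,T)$ for all large $n$, which is what the paper's proof needs.
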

\begin{proof}
  Note first that $S\big( \phi(X) \big) = \phi\big( T(X) \big) \subset \phi(X)$.
  So we can consider the metric dynamical system $(\phi(X), d_Y, S)$.

  Assume that $y \in \phi(X)$ is a periodic point in $(\phi(X),d_Y,S)$.
  Then there exists $n_0 \in \N$ such that $S^{n_0} y = y$.
  Take $x_0 \in X$ such that $\phi(x_0) = y$.
  Then for any $k \in \N$ we have $\phi(T^{kn_0} x_0) = S^{kn_0} \phi(x_0) = S^{kn_0} y = y$.
  This implies that $\{ T^{kn_0} x_0: k \in \N \}  \subset \phi^{-1}(\{y\})$, which is a finite set.
  Thus, there exist $k_1,k_2 \in \N$ with $k_1 < k_2$ such that $T^{k_1 n_0} x_0 = T^{k_2 n_0} x_0$.
  Let $x = T^{k_1 n_0} x_0$. Then we have $\phi(x) = y$ and $T^{(k_2 - k_1)n_0} x = x$.
  In other words, we find a periodic point $x$ in $(X,d_X, T)$ such that $\phi(x) = y$.
  Note that $\phi: X\to Y$ is $\alpha$-H\"{o}lder and $\phi\circ T=S\circ\phi$. Then  $\mathcal{O}_y = \phi(\mathcal{O}_x)$, and
  it follows that \[ \# \mathcal{O}_y \le \#\mathcal{O}_x \quad \text{and}\quad \eta(\mathcal{O}_y) \le C \cdot  \eta(\mathcal{O}_x)  ^\alpha, \]
  where $C>0$ is the constant in the definition of $\alpha$-H\"{o}lder mapping. So
  for $s \ge 0$ we have
  \[
  \#\mathcal O_y  \cdot  \eta(\mathcal O_y) ^{s/\alpha}\le \#\mathcal O_x\cdot C^{s/\alpha}\cdot\eta(\mathcal O_x)^s.
  \] This together with (\ref{eq:P-n}) implies that \[ \mathfrak{P}_n^{s/\alpha}(\phi(X),d_Y,S) \le C^{s/\alpha} \cdot \mathfrak{P}_n^s(X,d_X,T)\qquad \forall n \in \N. \]
  Letting $n \to +\f$ yields \[ \mathfrak{P}^{s/\alpha}(\phi(X),d_Y,S) \le C^{s/\alpha} \cdot \mathfrak{P}^s(X,d_X,T). \]
  Therefore, we conclude that $\mathfrak{s}(\phi(X),d_Y,S) \le \mathfrak{s}(X,d_X,T)/\alpha$.
\end{proof}

\begin{corollary}\label{cor:bi-Lipschitz}
  Let $(X,d_X, T)$ and $(Y, d_Y, S)$ be two metric dynamical systems. If there exists a bi-Lipschitz mapping $\phi: X \to Y$ such that $\phi\circ T = S \circ \phi$,
  then we have $\mathfrak{s}(Y,d_Y,S) = \mathfrak{s}(X,d_X,T)$.
\end{corollary}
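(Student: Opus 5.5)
Since $\phi$ is bi-Lipschitz, both $\phi$ and $\phi^{-1}$ are $1$-H\"older, and being bijective each has finite (singleton) fibres. The plan is therefore to apply the preceding Proposition twice, once in each direction with $\alpha=1$.

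First, apply the Proposition to $\phi: X\to Y$ with $\alpha=1$. The hypotheses hold: $\phi$ is Lipschitz, hence $1$-H\"older; it intertwines $T$ and $S$; and $\#\phi^{-1}(\{y\})=1$ for every $y\in Y$ by bijectivity. Since $\phi$ is surjective, $\phi(X)=Y$. The Proposition then gives $\mathfrak{s}(Y,d_Y,S)\le \mathfrak{s}(X,d_X,T)$.

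Second, set $\psi:=\phi^{-1}: Y\to X$. This map is Lipschitz by assumption and bijective. We must check the conjugacy $\psi\circ S=T\circ\psi$. From $\phi\circ T=S\circ\phi$, compose with $\psi$ on both sides: $T\circ\psi = \psi\circ\phi\circ T\circ \psi=\psi\circ S\circ\phi\circ\psi=\psi\circ S$. Fibres of $\psi$ are singletons and $\psi(Y)=X$. The Proposition applied to $\psi$ with $\alpha=1$ then yields $\mathfrak{s}(X,d_X,T)\le\mathfrak{s}(Y,d_Y,S)$.

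Combining the two inequalities gives the claimed equality. There is no real obstacle here. The only points needing care are checking the reverse conjugacy identity for $\phi^{-1}$, and noting that surjectivity gives $\phi(X)=Y$, so the Proposition's conclusion concerns the full system $(Y,d_Y,S)$ rather than a subsystem. Total boundedness of $Y$, needed for $\mathfrak{s}$ to be defined as in Lemma~\ref{lem:critical-value}, follows since Lipschitz images of totally bounded spaces are totally bounded.
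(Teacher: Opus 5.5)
Your proposal is correct and follows the paper's intended route: the corollary is stated as an immediate consequence of the preceding Proposition, applied with $\alpha=1$ to $\phi$ and to $\phi^{-1}$, and your checks of the reversed conjugacy $\phi^{-1}\circ S=T\circ\phi^{-1}$ and of $\phi(X)=Y$ fill in the details the paper leaves implicit. Your remark on total boundedness is harmless but unnecessary, since the paper builds total boundedness into its definition of a metric dynamical system.
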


Finally, we show that the periodic dimension is bounded above by the upper box-counting dimension of $X$, which is defined by (cf.~\cite{Falconer-2014-book})
\[ \overline{\dim}_{\mathrm{B}} X := \limsup_{r \to 0^+} \frac{\log \mathcal{N}_r(X)}{-\log r}, \]
where $\mathcal{N}_r(X)$ denotes the minimum number of open balls of radius $r$ needed to cover $X$.

\begin{lemma}\label{lem:upper-bound}
  We have $\mathfrak{s}(X,d,T) \le \overline{\dim}_{\mathrm{B}} X$.
\end{lemma}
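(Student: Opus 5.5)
The plan is to show that $\mathfrak{P}^t(X,d,T)=0$ for every $t>\overline{\dim}_{\mathrm{B}} X$. By the definition (\ref{eq:critical-value}) of $\mathfrak{s}(X,d,T)$ as an infimum, this gives $\mathfrak{s}(X,d,T)\le t$, and letting $t\downarrow\overline{\dim}_{\mathrm{B}} X$ finishes the proof. If $\overline{\dim}_{\mathrm{B}} X=+\f$ there is nothing to prove, so assume it is finite. Fix $t>\overline{\dim}_{\mathrm{B}} X$ and choose $u$ with $\overline{\dim}_{\mathrm{B}} X<u<t$. By the definition of upper box-counting dimension there is $r_0\in(0,1)$ such that $\mathcal{N}_r(X)\le r^{-u}$ for all $0<r<r_0$.

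The key step is a packing bound for a single periodic orbit. Let $x$ be periodic with $\eta:=\eta(\mathcal{O}_x)>0$; note $\eta>0$ automatically, since the orbit is finite. Cover $X$ by $\mathcal{N}_{\eta/2}(X)$ open balls of radius $\eta/2$. Each such ball has diameter at most $\eta$ in the strict sense: two points in the same ball are at distance $<\eta$. Hence each ball contains at most one point of $\mathcal{O}_x$, and so
\[ \#\mathcal{O}_x\le \mathcal{N}_{\eta/2}(X). \]
If $\eta/2<r_0$, this gives $\#\mathcal{O}_x\le 2^u\eta^{-u}$, and therefore
\[ \#\mathcal{O}_x\cdot\eta^t\le 2^u\eta^{t-u}. \]

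It remains to ensure that $\eta$ is small when the period is large. This is exactly the pigeonhole argument from the proof of Lemma~\ref{lem:critical-value}. Given $\ep\in(0,2r_0)$, there is $N$ such that every periodic point with $\#\mathcal{O}_x\ge n>N$ satisfies $\eta(\mathcal{O}_x)\le\ep$. Consequently
\[ \mathfrak{P}_n^t(X,d,T)\le 2^u\ep^{t-u}\qquad\text{for all } n>N. \]
Letting $n\to+\f$ and then $\ep\to0$ gives $\mathfrak{P}^t(X,d,T)=0$.

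There is no real obstacle. The only points needing care are the strict inequality in the ball argument, which guarantees one orbit point per ball, and the observation that $\eta(\mathcal{O}_x)\le\ep<2r_0$ puts us in the range where the box-counting estimate applies.
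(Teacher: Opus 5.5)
Your proof is correct and is essentially the paper's argument: the packing bound $\#\mathcal{O}_x\le\mathcal{N}_{\eta(\mathcal{O}_x)/2}(X)$, combined with the box-counting estimate and the fact that $\eta(\mathcal{O}_x)$ is small for long orbits by total boundedness. The only difference is bookkeeping. You bound $\mathfrak{P}^t_n(X,d,T)$ uniformly, via an intermediate exponent $u$, to get $\mathfrak{P}^t(X,d,T)=0$ directly. The paper instead bounds $\#\mathcal{O}_{x_n}\cdot\eta(\mathcal{O}_{x_n})^s\le 2^s$ along sequences and invokes Theorem~\ref{thm:P-s}(ii) to conclude $\mathfrak{P}^s(X,d,T)<+\infty$.
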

\begin{proof}
  The conclusion is immediate when $\overline{\dim}_{\mathrm{B}} X = +\f$.
  Hence, without loss of generality, we may assume that $\overline{\dim}_{\mathrm{B}} X < +\f$.
  Fix $s > \overline{\dim}_{\mathrm{B}} X$.
  By the definition of upper box-counting dimension, there exists $r_0 > 0$ such that
  \begin{equation}\label{eq:box-1} \mathcal{N}_r(X) \le r^{-s} \qquad \forall 0< r < r_0. \end{equation}
  Take any sequence $\{x_n\}\in \Gamma(X)$, and  write $\eta_n = \eta(\mathcal{O}_{x_n})$.
  Note that an open ball of radius $\eta_n/2$ intersects $\mathcal{O}_{x_n}$ at most one point.
  It follows that $\#\mathcal{O}_{x_n} \le \mathcal{N}_{\eta_n/2}(X)$.
  Since $X$ is totally bounded and $\#\mathcal O_{x_n}\to +\f$ as $n\to+\f$, we must have $\eta_n \to 0$ as $n \to +\f$.
  Thus, by (\ref{eq:box-1}) it follows that for all sufficiently large $n \in \N$, we have
  \[ \#\mathcal{O}_{x_n} \le \mathcal{N}_{\eta_n/2}(X) \le \Big( \frac{\eta_n}{2} \Big)^{-s}, \]
  i.e., $\#\mathcal{O}_{x_n} \cdot (\eta_n)^s \le 2^s$.
  It follows that \[ \limsup_{n \to +\f} \# \mathcal{O}_{x_n} \cdot   \eta( \mathcal{O}_{x_n} )  ^s \le 2^s.  \]
  By Theorem \ref{thm:P-s} (ii), we obtain that $\mathfrak{P}^s(X,d,T) < +\f$.
  So we have $\mathfrak{s}(X,d,T) \le s$.
  Since $s>\overline{\dim}_{\mathrm{B}} X$ was arbitrary, we conclude that $\mathfrak{s}(X,d,T) \le \overline{\dim}_{\mathrm{B}} X$.
\end{proof}

\section{Self-similar iterated function systems}\label{sec:self-similar-IFS}

In this section, we will prove Theorem \ref{thm:critical-value}.
We always assume that $\mathcal{F} =\{f_i(x) = r_i O_i x + b_i\}_{i=1}^m$ is a self-similar IFS on $\R^d$ satisfying the SSC.
Recall from Section \ref{sec:Introduction} the metric dynamical system $(E_{\mathcal F}, |\cdot|, T_{\mathcal F})$, where $E_{\mathcal F}$ is the self-similar attractor of $\mathcal F$ and $T_{\F}$ is the inverse expanding mapping of $\F$.
Recall that $s_{\F}$ is the similarity dimension of $\mathcal{F}$, which is the unique non-negative root of $\sum_{i=1}^{m}r_i^s=1$.
For brevity, write $\mathfrak{P}^s(\F)$ and $\mathfrak{s}(\F)$ for $\mathfrak{P}^s(E_\F, |\cdot|, T_\F)$ and $\mathfrak{s}(E_\F, |\cdot|, T_\F)$, respectively.
Since $\mathcal F$ satisfies the SSC, it is well-known that (cf.~\cite{Falconer-2014-book}) $\dim_{\mathrm H} E_{\mathcal F}=\overline{\dim}_{\mathrm B} E_{\mathcal F}=s_{\F}$.
By Lemma \ref{lem:upper-bound}, we have $\mathfrak{s}(\F) \le \overline{\dim}_{\mathrm{B}} E_{\F} = s_{\F}$. In fact, we have the following stronger result.

\begin{lemma}\label{lem:finite-P}
 We have $\mathfrak P^{s_\F}(\F)<+\f$.
\end{lemma}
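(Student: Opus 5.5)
By Theorem \ref{thm:P-s}(ii), it suffices to show that there is a constant $C>0$ such that every periodic point $x\in E_\F$ satisfies
\[
\#\mathcal O_x\cdot \eta(\mathcal O_x)^{s_\F}\le C .
\]
Lemma \ref{lem:upper-bound} only gives the weaker statement for $s>s_\F$. The plan is to replace the box-counting estimate there by a uniform mass-distribution argument for $\mu:=\mu_{\p}$ with $\p=(r_1^{s_\F},\ldots,r_m^{s_\F})$. The key fact is that, under the SSC, $\mu$ is Ahlfors $s_\F$-regular on $E_\F$. Concretely, there is a constant $c>0$ such that
\[
\mu\big(B(y,\rho)\big)\ge c\,\rho^{s_\F}\qquad\text{for all } y\in E_\F,\ 0<\rho\le \diam E_\F .
\]

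We prove this lower bound directly. Let $\delta>0$ be the minimal distance between the distinct first-level pieces $f_i(E_\F)$, and let $r_{\min}=\min_i r_i$. For $y=\pi(\omega)$ and $\rho\le\diam E_\F$, we stop at the first level $n$ with
\[
r_{\omega_1}\cdots r_{\omega_n}\,\diam E_\F\le \rho .
\]
Then the cylinder $f_{\omega_1\cdots\omega_n}(E_\F)$ contains $y$, has diameter at most $\rho$, and so is contained in $B(y,\rho)$ (after slightly enlarging the radius, or using a closed ball; constants are harmless). Its $\mu$-mass is
\[
(r_{\omega_1}\cdots r_{\omega_n})^{s_\F}\ge \big(r_{\min}\rho/\diam E_\F\big)^{s_\F}.
\]
This gives the lower bound. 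Only this direction is needed.

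Now let $x$ be periodic with orbit $\mathcal O_x$, and set $\eta=\eta(\mathcal O_x)$. Since $E_\F$ is compact and infinite orbits cannot occur here, we may assume $\#\mathcal O_x\ge2$, so $\eta\le\diam E_\F$. The open balls $B(y,\eta/2)$, $y\in\mathcal O_x$, are pairwise disjoint. Summing the $\mu$-masses of these balls gives
\[
\#\mathcal O_x\cdot c\,(\eta/2)^{s_\F}\le \sum_{y\in\mathcal O_x}\mu\big(B(y,\eta/2)\big)\le \mu(E_\F)=1 .
\]
Hence $\#\mathcal O_x\cdot\eta^{s_\F}\le 2^{s_\F}/c$ uniformly. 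It follows that $\mathfrak P_n^{s_\F}(\F)\le 2^{s_\F}/c$ for all $n$, so $\mathfrak P^{s_\F}(\F)<+\f$. Equivalently, in the form of the second part of Theorem \ref{thm:critical-value}, $\limsup_k \#\mathcal O_k\,\eta(\mathcal O_k)^{s_\F}<+\f$ along any sequence of periodic orbits.

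The only step requiring care is the uniform lower mass bound $\mu(B(y,\rho))\gtrsim\rho^{s_\F}$ with constants independent of $y$ and $\rho$. This is standard, and in fact does not even need the SSC. It is handled by the stopping-level cylinder argument above, using that $\mu(f_{\omega_1\cdots\omega_n}(E_\F))\ge(r_{\omega_1}\cdots r_{\omega_n})^{s_\F}$. This inequality follows from iterating the self-similarity relation for $\mu$; it is in fact an equality under the SSC. Nothing dynamical is used beyond the fact that the points of an orbit are $\eta$-separated.
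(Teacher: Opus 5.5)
Your proof is correct and is essentially the paper's argument. Both combine a uniform lower Ahlfors bound for the natural measure on $E_\F$ (your $\mu_{\p}$ with $\p=(r_1^{s_\F},\ldots,r_m^{s_\F})$ is exactly the normalized $\mathcal H^{s_\F}|_{E_\F}$ the paper uses) with a sum over the pairwise disjoint balls $B(y,\eta(\mathcal O_x)/2)$, $y\in\mathcal O_x$. The only difference is that you derive the lower bound $\mu_{\p}(B(y,\rho))\ge c\,\rho^{s_\F}$ by hand via a stopping-level cylinder, whereas the paper cites Hutchinson; stopping with a strict inequality, as the paper does in the proof of Theorem \ref{thm:periodic-measure}, settles the open-ball point. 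Also, your uniform bound controls $\mathfrak P_n^{s_\F}(\F)$ only for $n\ge 2$, not for all $n$, since fixed points give $\mathfrak P_1^{s_\F}(\F)=+\infty$; this does not affect the limit.
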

\begin{proof}
  Write $s=s_\F$. Since $\F$ satisfies the SSC, we have $0< \mathcal{H}^s(E_{\F}) < +\f$ and the associated Hausdorff measure $\mathcal H^s|_{E_{\F}}$ is $s$-Ahlfors regular (cf. \cite{Hutchinson_1981}). So there exists a constant $C>1$ such that for any ball $B(x,r)$ with $x\in E_\F$ and $0< r \le \mathrm{\diam}E_{\F}$ we have
  \begin{equation}\label{eq:sep-1}
    C^{-1}\cdot r^s\le \mathcal H^s(E_\F\cap B(x,r))\le C\cdot r^s.
  \end{equation}

  For any periodic point $x$ in $(E_\F, |\cdot|, T_\F)$ whose least period is sufficiently large, note that the open balls $B(y, \frac{\eta(\mathcal O_x)}{2}), y\in\mathcal O_x$ are pairwise disjoint, and it follows from (\ref{eq:sep-1}) that
  \begin{align*}
    \#\mathcal O_x\cdot\eta(\mathcal O_x)^s & =2^s C\cdot \#\mathcal O_x\cdot C^{-1}\left(\frac{\eta(\mathcal O_x)}{2}\right)^s \\
     & \le 2^s C\sum_{y\in\mathcal O_x}\mathcal H^s\left(E_\F\cap B\Big(y,\frac{\eta(\mathcal O_x)}{2}\Big)\right)\\
     &=2^s C\cdot\mathcal H^s\left(E_\F\cap\bigcup_{y\in\mathcal O_x}B\Big(y,\frac{\eta(\mathcal O_x)}{2}\Big)\right)\\
     &\le 2^s C\cdot\mathcal H^s(E_\F).
  \end{align*}
  By (\ref{eq:P-n}) and (\ref{eq:P-def}) we conclude that  $\mathfrak P^s(\F)\le 2^s C\cdot\mathcal H^s(E_\F)<+\f$ as required.
\end{proof}

To prove Theorem \ref{thm:critical-value}, it remains to show that $\mathfrak{s}(\F) \ge s_{\F}$.
To this end, we introduce a symbolic dynamical system that is bi-Lipschitz conjugate to $(E_\F, |\cdot|, T_\F)$.
Let $\Sigma := \{ 1, 2, \ldots, m\}^\N$ be the symbolic space over the alphabet $\{1,2,\ldots, m\}$, and let $\sigma$ be the left shift on $\Sigma$, i.e., $\sigma(i_1 i_2 i_3 \ldots) = i_2 i_3 \ldots$.
For a finite word $i_1 i_2 \ldots i_n \in \{ 1, 2, \ldots, m\}^n$, write $f_{i_1 i_2 \ldots i_n} := f_{i_1} \circ f_{i_2} \circ \cdots \circ f_{i_n}$ and $r_{i_1 i_2 \ldots i_n} = r_{i_1} r_{i_2} \cdots r_{i_n}$.
For $\ii\ne \jj \in \Sigma$, let $\ii\wedge\jj$ denote the longest common prefix of $\ii$ and $\jj$.
We equip $\Sigma$ with the metric
\begin{equation}\label{eq:d-F}
  d_\F(\ii, \jj) := r_{\ii \wedge \jj}.
\end{equation}
Then $(\Sigma,d_\F)$ is a compact metric space, and $(\Sigma,d_\F,\sigma)$ forms a metric dynamical system.
There is a natural coding mapping $\pi: \Sigma \to E_{\F}$ defined by
\begin{equation}\label{eq:coding-map}
\pi(\ii) = \lim_{n \to +\f} f_{i_1 i_2 \ldots i_n}(0) \quad \text{for}\;\; \ii=(i_k)_{k=1}^\f \in \Sigma.
\end{equation}
Note that the point $0$ in the definition of $\pi$ can be replaced by any other point.

\begin{lemma}\label{lem:pi-bi-Lipschitz}
  The coding mapping $\pi$ is bi-Lipschitz, and $\pi \circ \sigma = T_{\F} \circ \pi$.
\end{lemma}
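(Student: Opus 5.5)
We will prove the two assertions of Lemma \ref{lem:pi-bi-Lipschitz} separately, starting with the conjugacy relation, which is purely formal. Because of the SSC, the cylinder sets $f_i(E_\F)$, $1\le i\le m$, are pairwise disjoint. For $\ii=i_1i_2\ldots\in\Sigma$ we have $\pi(\ii)=f_{i_1}(\pi(\sigma\ii))\in f_{i_1}(E_\F)$. By the definition of $T_\F$, this gives $T_\F(\pi(\ii))=f_{i_1}^{-1}(\pi(\ii))=\pi(\sigma\ii)$. This already shows $\pi\circ\sigma=T_\F\circ\pi$. Surjectivity of $\pi$ is standard: $\pi(\Sigma)$ is a nonempty compact set satisfying \eqref{eq:self-similar-set}, so it equals $E_\F$ by uniqueness of the attractor.

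For the Lipschitz upper bound, take $\ii\ne\jj$ with common prefix $w=\ii\wedge\jj$ of length $n$. Both $\pi(\ii)$ and $\pi(\jj)$ lie in $f_w(E_\F)$, which has diameter $r_w\,\diam E_\F$. Hence
\[
|\pi(\ii)-\pi(\jj)|\le \diam(E_\F)\cdot d_\F(\ii,\jj).
\]
If $n=0$ we interpret $r_w=1$, and the same bound holds.

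For the reverse inequality, which is the only step with content, we use the SSC quantitatively. Since the compact sets $f_i(E_\F)$ are pairwise disjoint, the quantity
\[
\delta:=\min_{i\ne j}\operatorname{dist}\big(f_i(E_\F),f_j(E_\F)\big)
\]
is positive. Now write $\ii=w\,i\,\ii'$ and $\jj=w\,j\,\jj'$ with $i\ne j$. Since $f_w$ is a similitude with ratio $r_w$,
\[
|\pi(\ii)-\pi(\jj)|=r_w\,\big|f_i(\pi(\ii'))-f_j(\pi(\jj'))\big|\ge r_w\,\delta=\delta\, d_\F(\ii,\jj).
\]
This lower bound gives injectivity of $\pi$, so $\pi$ is a bijection onto $E_\F$, and $\pi^{-1}$ is Lipschitz with constant $\delta^{-1}$.

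The main point to handle carefully is that $\delta>0$: this is exactly where compactness together with the SSC is used. One also needs the scaling identity $|f_w(x)-f_w(y)|=r_w|x-y|$, which holds because each $O_i$ is orthogonal. With these, $\pi$ is bi-Lipschitz and conjugates $\sigma$ to $T_\F$, so by Corollary \ref{cor:bi-Lipschitz} the two systems have the same periodic dimension.
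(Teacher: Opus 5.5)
Your proposal is correct and follows essentially the same argument as the paper. Both prove the conjugacy from $\pi(\ii)=f_{i_1}(\pi(\sigma\ii))$ together with the SSC. Both get the upper bound from $\diam f_w(E_\F)=r_w\diam E_\F$ and the lower bound from $r_w\cdot\min_{i\ne j}\mathrm{dist}(f_i(E_\F),f_j(E_\F))$. You treat the empty-prefix case uniformly rather than separately, and you make the surjectivity and injectivity of $\pi$ explicit, which the paper leaves implicit.
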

\begin{proof}
  Take $\ii=(i_k)_{k=1}^\f \in \Sigma$, and we have
  \[ \pi(\ii) = \lim_{n \to +\f} f_{i_1 i_2 \ldots i_n}(0) = f_{i_1} \Big( \lim_{n \to +\f} f_{i_2 \ldots i_n}(0) \Big) = f_{i_1} \Big( \pi\big( \sigma (\ii) \big) \Big). \]
  This means that $\pi(\ii) \in f_{i_1}(E_{\F})$. Since $\F$ satisfies the SSC, we have $\pi(\ii)\notin f_j(E_\F)$ for any $j\ne i_1$.
  Thus, we obtain that $T_{\F} \big( \pi(\ii) \big) = \pi\big(\sigma (\ii) \big)$.
  That is, $\pi \circ \sigma = T_{\F} \circ \pi$.
  Next, we show that $\pi$ is bi-Lipschitz.

  Take $\ii \ne \jj \in \Sigma$, and write $\ii = (i_k)_{k=1}^\f$ and $\jj = (j_k)_{k=1}^\f$.
  If $i_1 \ne j_1$, then $d_{\F}(\ii,\jj) = 1$, and furthermore,
  \begin{align*}
   |\pi(\ii) - \pi(\jj)| &\le \mathrm{diam}\; E_\F=\mathrm{diam}\; E_\F\cdot d_\F(\ii,\jj),\\
   |\pi(\ii) - \pi(\jj)| &\ge \mathrm{dist}\big( f_{i_1}(E_\F), f_{j_1}(E_\F) \big) \ge \min_{1\le i< j \le m} \mathrm{dist}\big( f_i(E_\F), f_j(E_\F) \big)\cdot d_\F(\ii,\jj).
   \end{align*}
  Otherwise, let $n = \max \{ k \in \N: i_\ell = j_\ell \;\;\forall 1 \le \ell \le k \}$.
  Then we have $d_{\F}(\ii,\jj) = r_{i_1 i_2 \ldots i_n}$.
  Note that $\pi(\ii), \pi(\jj) \in f_{i_1 i_2 \ldots i_n}(E_{\F})$.
  So we have \[ |\pi(\ii) - \pi(\jj)| \le \mathrm{diam}\; f_{i_1 i_2 \ldots i_n}(E_{\F}) = r_{i_1 i_2 \ldots i_n} \cdot \mathrm{diam}\; E_\F=\mathrm{diam}\; E_\F\cdot d_\F(\ii,\jj). \]
  On the other hand,
  \begin{align*}
    |\pi(\ii) - \pi(\jj)| & \ge \mathrm{dist}\big( f_{i_1 i_2 \ldots i_{n+1}}(E_\F), f_{j_1 j_2 \ldots j_{n+1}}(E_\F) \big)\\
       &= r_{i_1 i_2 \ldots i_n} \cdot \mathrm{dist}\big( f_{i_{n+1}}(E_\F), f_{j_{n+1}}(E_\F) \big) \\
    & \ge r_{i_1 i_2 \ldots i_n} \cdot \min_{1\le i< j \le m} \mathrm{dist}\big( f_i(E_\F), f_j(E_\F) \big)\\
    &=\min_{1\le i< j \le m} \mathrm{dist}\big( f_i(E_\F), f_j(E_\F) \big)\cdot d_\F(\ii,\jj).
  \end{align*}
  Let \[ C_{\F} := \max\bigg\{ \frac{1}{\displaystyle\min_{1\le i< j \le m} \mathrm{dist}\big( f_i(E_\F), f_j(E_\F) \big)},\; \mathrm{diam}\; E_\F \bigg\}. \]
  We conclude that
  \[ C_{\F}^{-1} \cdot d_{\F}(\ii,\jj) \le  |\pi(\ii) - \pi(\jj)| \le C_{\F}\cdot d_{\F}(\ii,\jj), \]
  i.e., $\pi$ is a bi-Lipschitz mapping.
\end{proof}

It follows from Lemma \ref{lem:pi-bi-Lipschitz} and Corollary  \ref{cor:bi-Lipschitz} that $ \mathfrak{s}(\F) = \mathfrak{s}(\Sigma, d_{\F}, \sigma)$.
In the following we only need to deal with the symbolic dynamical system $(\Sigma, d_{\F}, \sigma)$.
We first consider the homogeneous case.

\begin{proposition}\label{prop:homogeneous}
If $\F$ is homogeneous, i.e., $r_i = r$ for all $1 \le i \le m$, then for any $n \in \N$, there exists a periodic point $\ii^{(n)}$ in $(\Sigma, d_{\F}, \sigma)$ such that \[ \# \mathcal{O}_{\ii^{(n)}} = m^n \quad \text{and}\quad \eta(\mathcal{O}_{\ii^{(n)}}) = r^{n-1}. \]
\end{proposition}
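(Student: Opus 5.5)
The natural construction uses a de Bruijn sequence. Let $w = w_1 w_2 \ldots w_{m^n}$ be a cyclic de Bruijn word of order $n$ over $\{1,\ldots,m\}$. This means that every word of length $n$ occurs exactly once among the $m^n$ cyclic windows $w_{k+1} w_{k+2}\ldots w_{k+n}$ (indices mod $m^n$), for $0\le k<m^n$. Such words exist for all $m\ge 2$ and $n\ge1$; this is classical, via an Eulerian circuit in the de Bruijn graph. Set $\ii^{(n)} := w^\infty = www\ldots \in \Sigma$. Then $\sigma^{m^n}\ii^{(n)} = \ii^{(n)}$, so $\ii^{(n)}$ is periodic and its orbit is $\{\sigma^k \ii^{(n)}: 0\le k<m^n\}$.

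First we show that the least period is exactly $m^n$. Suppose $\sigma^k\ii^{(n)} = \sigma^{\ell}\ii^{(n)}$ with $0\le k<\ell<m^n$. Then the two points have the same first $n$ letters. But these are precisely the cyclic windows of $w$ starting at positions $k+1$ and $\ell+1$, which are distinct by the de Bruijn property. So the $m^n$ points are distinct and $\#\mathcal O_{\ii^{(n)}} = m^n$.

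Next we compute $\eta$. In the homogeneous case $d_\F(\ii,\jj) = r^{|\ii\wedge\jj|}$. Two distinct orbit points $\sigma^k\ii^{(n)}$ and $\sigma^\ell\ii^{(n)}$ have distinct length-$n$ prefixes, so $|\sigma^k\ii^{(n)}\wedge\sigma^\ell\ii^{(n)}|\le n-1$. Hence every pairwise distance is at least $r^{n-1}$, that is, $\eta(\mathcal O_{\ii^{(n)}})\ge r^{n-1}$.

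For the reverse inequality we need a pair whose common prefix has length exactly $n-1$ (we take $n\ge 2$; the case $n=1$ is trivial, since $r^0=1$ and points with different first letters have distance $1$). Take two length-$n$ words sharing their first $n-1$ letters, say $1^{n-1}1$ and $1^{n-1}2$. Both occur as windows, at some positions $k$ and $\ell$, so the corresponding orbit points satisfy $d_\F = r^{n-1}$. Therefore $\eta(\mathcal O_{\ii^{(n)}}) = r^{n-1}$.

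The only real ingredient is the existence of de Bruijn sequences, which I would cite. If a self-contained argument is preferred, I would sketch it: the de Bruijn graph on $\{1,\ldots,m\}^{n-1}$, with an edge for each word of length $n$, is balanced and strongly connected, so it has an Eulerian circuit. Reading off the edge labels of this circuit gives $w$. This is the step I expect to need the most care, though it is standard; the rest is bookkeeping with the metric $d_\F$.
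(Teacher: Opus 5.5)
Your proof is correct and follows the paper's route. Both take $\ii^{(n)}$ to be the periodic sequence of a cyclic de Bruijn word of order $n$, obtained from an Eulerian circuit in the de Bruijn graph on $\{1,\ldots,m\}^{n-1}$, and both use the fact that the length-$n$ windows are distinct and exhaustive to get $\#\mathcal O_{\ii^{(n)}}=m^n$ and $\eta(\mathcal O_{\ii^{(n)}})=r^{n-1}$. Your explicit witness pair $1^{n}$, $1^{n-1}2$ for the upper bound $\eta\le r^{n-1}$ spells out a step the paper leaves implicit; that step uses $m\ge 2$, which the paper also tacitly assumes.
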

\begin{proof}
  An algorithm to construct these periodic sequences $\ii^{(n)}$ can be found in \cite{Martin-1934}.
  In fact, $\ii^{(n)}$ is a de Bruijn sequence of order $n$ on a size-$m$ alphabet and can be constructed by using the de Bruijn graph \cite{de-Bruijn-1946}. For completeness, we present the detailed construction as follows.

  Since $\F$ is homogeneous, we have $d_{\F}(\ii,\jj) = r^{|\ii \wedge \jj|}$, where $|\ii\wedge\jj|$ denotes the length of the longest common prefix of $\ii$ and $\jj$.
  For $n = 1$, we can take $\ii^{(1)} = (1 2 \ldots m)^\f$.
  In the following, we assume that $n \ge 2$.

  We first construct a directed graph $G=(V,E)$, where $V = \{1,2,\ldots, m\}^{n-1}$.
  For $\u=u_1\ldots u_{n-1}, \v=v_1\ldots v_{n-1}\in V$ we draw a directed edge $(\u,\v)\in E$  from $\u$ to $\v$ if \[ u_2\ldots u_{n-1}=v_1\ldots v_{n-2},\] and furthermore, we label the edge $(\u,\v)$ by $\mathcal{L}(\u,\v)=u_1$.
  An infinite path in $G$ gives rise to an infinite sequence of labels, and clearly the sequence corresponding to a path starting at $\v \in V$ begins with $\v$.
  One can easily verify that the graph $G$ is strongly connected, and each vertex in $G$ has $m$ ingoing edges and $m$ outgoing edges.
  Note that the number of directed edges in $G$ is $m^n$.
  By \cite[Theorem 7.1]{Harary-1969} it follows that $G$ has a closed eulerian trail
  \begin{equation}\label{eq:eulerian-trail}
    \v_0 \xrightarrow{(\v_0,\v_1)} \v_1 \xrightarrow{(\v_1,\v_2)} \v_2 \cdots \cdots \v_{m^n-1} \xrightarrow{(\v_{m^n-1},\v_{m^n})} \v_{m^n} = \v_0
  \end{equation}
  which contains each directed edge in $G$ exactly once.
  Take $\ii^{(n)}$ to be the label sequence corresponding to this eulerian trail, i.e., \[ \ii^{(n)} = \big( \mathcal{L}(\v_0,\v_1) \mathcal{L}(\v_1,\v_2) \ldots\ldots \mathcal{L}(\v_{m^n-1},\v_0) \big)^\f. \]

  For any $n$-length word $i_1 i_2 \ldots i_n \in \{1,2,\ldots,m\}^n$, there exists a unique $0 \le k \le m^n -1$ such that the directed edge $(\v_k,\v_{k+1})$ in the eulerian trail (\ref{eq:eulerian-trail}) satisfies
  \[ \v_k = i_1 i_2 \ldots i_{n-1} \quad \text{and} \quad \v_{k+1} = i_2 i_3 \ldots i_n. \]
  Then the sequences $\sigma^k(\ii^{(n)})$ and $\sigma^{k+1}(\ii^{(n)})$ begin with $\v_k$ and $\v_{k+1}$, respectively. So we obtain that the sequence $\sigma^k(\ii^{(n)})$ begins with $i_1 i_2 \ldots i_n$.
  Thus, we conclude that \[ \# \mathcal{O}_{\ii^{(n)}} = m^n \quad \text{and}\quad \eta(\mathcal{O}_{\ii^{(n)}}) = r^{n-1}, \]
  as desired.
\end{proof}

A general self-similar IFS can be approximated by sub-homogeneous self-similar IFSs in the sense of Hausdorff dimension (see \cite[Proposition 6]{Peres-Shmerkin-2009} and \cite[Lemma 2.4]{Orponen-2012}). This enables us to complete the proof of Theorem \ref{thm:critical-value}.

\begin{proof}[Proof of Theorem \ref{thm:critical-value}]
  Write $s = s_{\F}$.
  By Lemma \ref{lem:finite-P} we have $\mathfrak{P}^{s}(\F)< +\f$, and hence $\mathfrak{s}(\F) \le s$.
  It follows from Theorem \ref{thm:P-s} (ii) that for any sequence of periodic orbits $\{\mathcal O_k\}_{k=1}^\f$ in $(E_\F, |\cdot|, T_\F)$,
  \[
  \limsup_{k\to+\f}\#\mathcal O_k\cdot\eta(\mathcal O_k)^{s}<+\f.
  \]
  It suffices to show the lower bound $\mathfrak{s}(\F) \ge s$. This is clear if $s=0$. In the following we assume $s>0$.
  By Lemma \ref{lem:pi-bi-Lipschitz} and Corollary \ref{cor:bi-Lipschitz} it follows that $ \mathfrak{s}(\F) = \mathfrak{s}(\Sigma, d_{\F}, \sigma)$.
  Thus, we only need to show that $\mathfrak{s}(\Sigma, d_{\F}, \sigma) \ge s$.

  When $\F$ is homogeneous, i.e., $r_i = r$ for all $1 \le i \le m$, we have $s = - \log m / \log r$.
  Let $\{\ii^{(n)}\}\subset\Sigma$ be the sequence of periodic points constructed as in Proposition \ref{prop:homogeneous}.
  Then we have \[ \# \mathcal{O}_{\ii^{(n)}} \cdot   \eta( \mathcal{O}_{\ii^{(n)}} )  ^s = m^n \cdot (r^{n-1})^{-\log m / \log r} =m. \]
  By Theorem \ref{thm:P-s} (i), we have $\mathfrak{P}^s(\Sigma, d_{\F}, \sigma)>0$, and hence $\mathfrak{s}(\Sigma, d_{\F}, \sigma) \ge s$.

  Next, we consider the inhomogeneous IFS $\F$.
  Without loss of generality, we assume that $r_1 \le r_2 \le \ldots \le r_m$.
  Fix $0< \ep < s$, and choose a sufficiently large integer $N$ {such that
  \begin{equation}\label{eq:july13-1}
  \frac{m\log(N+1)}{-N\log r_m}<\ep.
  \end{equation}}
  Note that $\sum_{i=1}^{m}r_i^s=1$. This implies
  $$1=\bigg( \sum_{i=1}^m r_i^s \bigg)^N= \sum_{\ell_1+\cdots+\ell_m=N}\frac{N!}{\ell_1!\cdots \ell_m!}\prod_{i=1}^m r_i^{s \ell_i},$$
  where the sum is over all non-negative integers $\ell_1, \ldots, \ell_m$ with $\ell_1+\cdots+\ell_m=N$.
  Note that the number of non-negative integer solutions to the equation $\ell_1+\cdots+\ell_m=N$ is less than $(N+1)^m$.
  Thus there exist non-negative integers $\ell_1^*,\ldots,\ell_m^*$ with $\ell_1^*+\cdots+\ell_m^*=N$ such that
  \begin{equation}\label{dim_estimate_1}
	\frac{N!}{\ell_1^*!\cdots \ell^*_m!} \prod_{i=1}^mr_i^{s\ell^*_i}  \ge \frac{1}{(N+1)^m}.
  \end{equation}
  For $i_1 i_2 \ldots i_N \in \{1,2,\ldots,m\}^N$ and $1 \le j \le m$, let $L_j(i_1 i_2 \ldots i_N) := \# \{ 1\le k \le N: i_k = j\}$. Then $L_j(i_1\ldots i_N)$ is the number of digit $j$ appearing in the block $i_1\ldots i_N$.
  Define \[ \mathcal{G} = \big\{ f_{i_1 i_2 \ldots i_N} : L_j(i_1 i_2 \ldots i_N)=\ell_j^* \;\;\forall 1 \le j \le m \big\}. \]
  Then $\mathcal{G}$ is a homogeneous self-similar IFS with the common contractive ratio $r_{\mathcal{G}}=\prod_{i=1}^m r_i^{\ell_i^*}$, and $\mathcal{G}$ satisfies the SSC.
  Write $\mathcal{G}=\big\{ f_{\u^{(1)}}, f_{\u^{(2)}}, \ldots, f_{\u^{(q)}} \big\}$, where $\u^{(i)} \in \{1,2,\ldots,m\}^N$ and \[q = {\# \mathcal{G}} = \frac{N!}{\ell_1^*!\cdots \ell_m^*!}. \]
  Let $t$ denote the similarity dimension of $\mathcal{G}$, i.e., \[ \frac{N!}{\ell_1^*!\cdots \ell_m^*!}\bigg( \prod_{i=1}^m r_i^{\ell_i^*} \bigg)^{t} = 1. \]
  Together with (\ref{dim_estimate_1}), we obtain that
  \[ \frac{1}{(N+1)^m} \le \bigg( \prod_{i=1}^m r_i^{\ell_i^*} \bigg)^{ s-t} \le r_m^{N(s-t)}.  \]
  This together with (\ref{eq:july13-1}) implies
  \begin{equation}\label{eq:july13-2} t \ge s + \frac{m\log(N+1)}{N\log r_m}>s-\ep. \end{equation}

  Now we consider the symbolic dynamical system $(\widetilde{\Sigma}, d_{\mathcal{G}}, \widetilde{\sigma})$ associated with the homogeneous self-similar IFS $\mathcal{G}$, where $\widetilde{\Sigma} =\{ \u^{(1)},\u^{(2)}, \ldots, \u^{(q)} \}^\N$ and $\widetilde{\sigma}$ is the left shift on $\widetilde{\Sigma}$. Note that each digit in $\widetilde{\Sigma}$ is a block in $\set{1,2,\ldots,m}^N$.
  By Proposition \ref{prop:homogeneous}, for $n\in\N$ there exists a periodic point $\i^{(n)} = \u^{(i_1)}\u^{(i_2)}\ldots$ in $(\widetilde{\Sigma}, d_{\mathcal{G}}, \widetilde{\sigma})$ such that
  \begin{equation}\label{eq:july13-3}
   \#\mathcal{O}_{\ii^{(n)}} = q^n \quad \text{and} \quad \eta( \mathcal{O}_{\ii^{(n)}}) = r_{\mathcal{G}}^{n-1}. \end{equation}
 Based on $\ii^{(n)}$ we construct a periodic point $\jj^{(n)}$ in the original symbolic dynamical system $(\Sigma, d_{\F},\sigma)$ associated with the self-similar IFS $\F$ by
   \begin{equation}\label{eq:july13-4}\jj^{(n)} := \underbrace{1^{2N-1} m\; \u^{(i_1)} \ldots \u^{(i_n)}}_{\textrm{length}\;N(n+2)}\; \underbrace{1^{2N-1} m\; \u^{(i_{n+1})} \ldots \u^{(i_{2n})}}_{\textrm{length}\;N(n+2)}\; \ldots.\end{equation}
  We claim that $\jj^{(n)}$ is a periodic point in $(\Sigma, d_{\F},\sigma)$ with
  \begin{equation}\label{eq:july13-5} \#\mathcal{O}_{\jj^{(n)}} = N \cdot (n+2) \cdot \frac{q^n}{\gcd(q^n,n)} \ge N q^n \quad \text{and}\quad \eta(\mathcal{O}_{\jj^{(n)}}) \ge r_1^{4N} r_{\mathcal{G}}^{n+1}. \end{equation}
  If the claim were proved, then noting that $t = - \log q / \log r_{\mathcal{G}}$ we have
  \[ \#\mathcal{O}_{\jj^{(n)}} \cdot  \eta(\mathcal{O}_{\jj^{(n)}})  ^t \ge Nq^n \cdot ( r_1^{4N} r_{\mathcal{G}}^{n+1} )^t  =  N(r_1^{4N} r_{\mathcal{G}})^t. \]
  Therefore, we obtain a sequence of periodic points $\{\jj^{(n)}\}_{n=1}^\f$ in $(\Sigma, d_{\F}, \sigma)$ such that  \[ \lim_{n \to +\f} \# \mathcal{O}_{\jj^{(n)}} = +\f \quad \text{and}\quad
  \liminf_{n \to +\f} \#\mathcal{O}_{\jj^{(n)}} \cdot   \eta(\mathcal{O}_{\jj^{(n)}})  ^t \ge  N(r_1^{4N} r_{\mathcal{G}})^t >0. \]
  By Theorem \ref{thm:P-s} (i), we obtain that $\mathfrak{P}^t(\Sigma, d_{\F},\sigma) > 0$. By (\ref{eq:july13-2}) it follows that $\mathfrak{s}(\Sigma, d_{\F},\sigma) \ge t > s-\ep$.
  Letting $\ep \to 0$, we conclude that $\mathfrak{s}(\Sigma, d_{\F},\sigma) \ge s$.

  It remains to prove the claim.
  Since $r_1 \le r_m$ and $\ep < s$, by (\ref{eq:july13-1}) and (\ref{dim_estimate_1}) we have $\ell_1^* < N$.
  For any $1\le i \le q$, we have $L_1(\u^{(i)}) = \ell_1^* < N$ and hence, $\u^{(i)} \ne 1^N$.
  First we consider $\#\mathcal O_{\jj^{(n)}}$ in (\ref{eq:july13-5}). Suppose that $\sigma^k(\jj^{(n)}) = \jj^{(n)}$ for some $k \in \N$. Note that $\u^{(i_\ell)} \ne 1^N$ for all $\ell \in \N$. Then by (\ref{eq:july13-4}) it follows that
  \[k=N\cdot (n+2) \cdot \zeta\quad \textrm{for some}\;\; \zeta \in \set{1,2,\ldots,q^n},\]
  which implies that $\widetilde{\sigma}^{\zeta n}(\ii^{(n)}) = \ii^{(n)}$. Note by (\ref{eq:july13-3}) that
    $q^n$ is the least period of $\ii^{(n)}$. So we have $q^n \mid \zeta n$, i.e.,
  \[ \frac{q^n}{\gcd(q^n, n)} \mid \zeta. \]
  Thus,   the least period of $\jj^{(n)}$ is given by   \[\#\mathcal{O}_{\jj^{(n)}}  = N \cdot (n+2) \cdot \frac{q^n}{\gcd(q^n,n)}\ge N q^n. \]

Next we consider $\eta(\mathcal O_{\jj^{(n)}})$ in (\ref{eq:july13-5}).  For $0\le k_1 < k_2 < \#\mathcal{O}_{\jj^{(n)}}$, write
\begin{equation}\label{eq:k-i}
k_1 = \zeta_1 N(n+2) + \tau_1\quad \textrm{and}\quad k_2 = \zeta_2 N(n+2) + \tau_2,
\end{equation} where $0\le \zeta_1 \le \zeta_2 < q^n / \gcd(q^n,n)$, $0\le \tau_1, \tau_2 < N(n+2)$.
  Suppose  on the contrary that
  \begin{equation}\label{eq:d-F-small}
    d_{\F}\Big( \sigma^{k_1}(\jj^{(n)}), \sigma^{k_2}(\jj^{(n)}) \Big) < r_1^{4N} r_{\mathcal{G}}^{n+1}.
  \end{equation}
  Let $v_1 v_2 \ldots v_\ell\in\set{1,2,\ldots,m}^*$ be the longest common prefix of $\sigma^{k_1}(\jj^{(n)})$ and $\sigma^{k_2}(\jj^{(n)})$.
  Note that $r_{\mathcal{G}} \ge r_1^N$.
  By (\ref{eq:d-F-small}), in view of the forms of $\jj^{(n)}$ defined in (\ref{eq:july13-4}) we have $\ell > N(n+4)$.
  This means that the finite word $1^{2N-1}m$ appears in the common prefix $v_1 v_2 \ldots v_\ell$.
  Note that $\u^{(i_\ell)} \ne 1^N$ for all $\ell \in \N$.
  So we must have $\tau_1 = \tau_2 = \tau$.
  If $0\le \tau \le 2N$, then by (\ref{eq:july13-4}) and (\ref{eq:k-i}) it follows that
  \begin{align*}
    \si^{k_1}(\jj^{(n)}) & =\si^{\tau}(1^{2N-1}m)\underbrace{\u^{(i_{\zeta_1 n+1})} \u^{(i_{\zeta_1 n+2})} \ldots\u^{(i_{\zeta_1 n+n})}\;1^{2N-1}m}_{\textrm{length}\;N(n+2)}\ldots; \\
    \si^{k_2}(\jj^{(n)}) & =\si^{\tau}(1^{2N-1}m)\underbrace{\u^{(i_{\zeta_2 n+1})} \u^{(i_{\zeta_2 n+2})} \ldots\u^{(i_{\zeta_2 n+n})}\;1^{2N-1}m}_{\textrm{length}\; N(n+2)}\ldots.
  \end{align*}
  So, by (\ref{eq:d-F-small}) we have
  \[ \u^{(i_{\zeta_1 n +1})} \u^{(i_{\zeta_1 n +2})} \ldots \u^{(i_{\zeta_1 n+n})} = \u^{(i_{\zeta_2 n +1})} \u^{(i_{\zeta_2 n +2})} \ldots \u^{(i_{\zeta_2 n+n})}. \]
  This leads to a contradiction with $\eta(\mathcal{O}_{\ii^{(n)}}) = r_{\mathcal{G}}^{n-1}$.
  If $2N<\tau < N(n+2)$, then write $\tau = (2+\xi)N + \eta$ with $0\le \xi< n$ and $0\le \eta <N$, and by (\ref{eq:july13-4}) and (\ref{eq:k-i}) it follows that
  \begin{align*}
    \si^{k_1}(\jj^{(n)}) & =\si^{\eta}(\u^{(i_{\zeta_1 n+\xi+1})})\underbrace{\u^{(i_{\zeta_1 n+\xi+2})}\ldots\u^{(i_{\zeta_1 n+n})}\;1^{2N-1}m\; \u^{(i_{\zeta_1 n+n+1})}\ldots\u^{(i_{\zeta_1 n+n+\xi+1})}}_{\textrm{length}\;N(n+2)}\ldots; \\
    \si^{k_2}(\jj^{(n)}) & =\si^{\eta}(\u^{(i_{\zeta_2 n+\xi+1})})\underbrace{\u^{(i_{\zeta_2 n+\xi+2})}\ldots\u^{(i_{\zeta_2 n+n})}\;1^{2N-1}m\; \u^{(i_{\zeta_2 n+n+1})}\ldots\u^{(i_{\zeta_2 n+n+\xi+1})}}_{\textrm{length}\;N(n+2)}\ldots.
  \end{align*}
  Thus, by (\ref{eq:d-F-small}) we have
  \[ \u^{(i_{\zeta_1 n +\xi +2})} \u^{(i_{\zeta_1 n +\xi +3})} \ldots \u^{(i_{\zeta_1 n +\xi+n+1})} = \u^{(i_{\zeta_2 n +\xi +2})} \u^{(i_{\zeta_2 n +\xi +3})} \ldots \u^{(i_{\zeta_2 n +\xi+n+1})}. \]
  This also leads to a contradiction with $\eta(\mathcal{O}_{\ii^{(n)}}) = r_{\mathcal{G}}^{n-1}$.
  We have proved the claim. The proof is complete.
\end{proof}

\section{Approximating self-similar measure by periodic measures}\label{sec:self-similar-measure}

In this section, we will prove Theorem \ref{thm:periodic-measure}.
We always assume that $\mathcal{F} =\{f_i(x) = r_i O_i x + b_i\}_{i=1}^m$ is a self-similar IFS on $\R^d$ that satisfies the SSC, and write $s = s_{\F}$.
Recall the symbolic dynamical system $(\Sigma, d_{\F}, \sigma)$ and the coding map $\pi$ defined in Section \ref{sec:self-similar-IFS}.

Given a probability vector $\p=(p_1, \ldots, p_m)$, let $\nu_{\p}$ be the Bernoulli measure on $\Sigma$ associated with the probability vector $\p$.
Then the self-similar measure $\mu_{\p}$ is the projection of $\nu_{\p}$, i.e.,
\[\mu_{\p} = \nu_{\p} \circ \pi^{-1}.\]
We will prove Theorem \ref{thm:periodic-measure} by passing to the symbolic dynamical system $(\Sigma, d_{\F}, \sigma)$.

\begin{proof}[Proof of Theorem \ref{thm:periodic-measure}]
  Write $\mu = \mu_{\p}$ and $\nu= \nu_{\p}$ for $\p=(r_1^s, r_2^s, \ldots, r_m^s)$.
  Note that $\{x_n\}_{n=1}^\f$ is a sequence of periodic points in $(E_{\F}, |\cdot|, T_{\F})$ with $\#\mathcal{O}_{x_n} \to +\f$ as $n \to +\f$, and $\{\mu_n\}_{n=1}^\f$ is the sequence of corresponding periodic measures.

  Let $\ii^{(n)} = \pi^{-1}(x_n)\in \Sigma$.
  Then $\{\ii^{(n)}\}_{n=1}^\f$ is a sequence of periodic points in $(\Sigma, d_{\F}, \sigma)$ with $\#\mathcal{O}_{\ii^{(n)}} = \# \mathcal{O}_{x_n}$.
  Note by Lemma \ref{lem:pi-bi-Lipschitz} that $\pi$ is bi-Lipschitz.
  Then by the assumption we have \[ \liminf_{n \to +\f} \#\mathcal{O}_{\ii^{(n)}} \cdot   \eta(\mathcal{O}_{\ii^{(n)}}) ^s > 0. \]
  Thus there exists a constant $\ep_0>0$ such that
  \begin{equation}\label{eq:july3-1} \#\mathcal{O}_{\ii^{(n)}} \cdot   \eta(\mathcal{O}_{\ii^{(n)}})  ^s  \ge \ep_0 \quad \forall n \in \N. \end{equation}
  Consider the periodic probability measures \[ \nu_n = \frac{1}{ \#\mathcal{O}_{\ii^{(n)}} } \sum_{\boldsymbol{y} \in \mathcal{O}_{\ii^{(n)}}} \delta_{\boldsymbol{y}}, \]
  and then we have $\mu_n = \nu_n \circ \pi^{-1}$.
  Note that $\mu = \nu \circ \pi^{-1}$.
  It suffices to show that $\{\nu_n\}_{n=1}^\f$ converges weakly to $\nu$.

  Since $\Sigma$ is compact, any sequence of probability measures on $\Sigma$ has a weak convergent subsequence.
  We only need to show that any weak convergent subsequence of $\{\nu_n\}_{n=1}^\f$ converges weakly to $\nu$.
  Without loss of generality, we may assume that $\{\nu_n\}_{n=1}^\f$ converges weakly to $\lambda$.
  Note that $\nu_n$ is $\sigma$-invariant. By \cite[Theorem 6.10 (i)]{Walters-1982-book} the weak limit $\lambda$ is also $\sigma$-invariant.
  Note that $\nu$ is ergodic with respect to $\sigma$.
  To show that $\lambda = \nu$, by \cite[Theorem 6.10 (iv)]{Walters-1982-book} it is enough to show that $\lambda$ is absolutely continuous with respect to $\nu$, i.e., $\lambda \ll \nu$.

  For a finite word $i_1 i_2 \ldots i_n \in \{1,2,\ldots, m\}^n$, define the cylinder
  \[ [i_1 i_2 \ldots i_n] := \big\{ (j_k)_{k=1}^\f \in \Sigma: j_1 j_2 \ldots j_n = i_1 i_2 \ldots i_n \big\}. \]
  We have \[ \nu\big( [i_1 i_2 \ldots i_n] \big) = (r_{i_1} r_{i_2} \ldots r_{i_n})^s. \]
  Write $r = \min\{ r_1, r_2, \ldots, r_m\}$.
  For any $\ii =(i_k)_{k=1}^\f \in \Sigma$ and any $0< \rho < r$, we can find a unique $n \in \N$ such that $r_{i_1} r_{i_2} \ldots r_{i_n} < \rho \le r_{i_1} r_{i_2} \ldots r_{i_{n-1}}$, and then we have the open ball $B(\ii,\rho) =[i_1 i_2 \ldots i_n]$.
Thus, we obtain
\begin{equation}\label{eq:nu-ball}
  \nu\big(  B(\ii,\rho) \big) = (r_{i_1} r_{i_2} \ldots r_{i_n})^s \ge r^s \rho^s.
\end{equation}

  Take any cylinder $F = [i_1 i_2 \ldots i_\ell]$ in $\Sigma$.
  We clearly have
  \begin{equation}\label{eq:mu-n-cylinder}
    \nu_n(F) = \frac{\#\big( F \cap \mathcal{O}_{\ii^{(n)}} \big)}{\# \mathcal{O}_{\ii^{(n)}}}.
  \end{equation}
  Write $\eta_n = \eta(\mathcal{O}_{\ii^{(n)}})$.
  Since $\Sigma$ is compact and $\lim_{n\to\f}\# \mathcal{O}_{\ii^{(n)}} =+\f$, we have $\eta_n \to 0$ as $n \to \f$.
  Then there exists $n_0 \in \N$ such that \[ \eta_n < r^\ell \quad \forall n \ge n_0. \]
  So, for $n \ge n_0$ we have
  \[ \bigcup_{\boldsymbol{y} \in F \cap \mathcal{O}_{\ii^{(n)}}} B(\boldsymbol{y}, \eta_n/ 2) \subset F,  \]
  where the unions are pairwise disjoint.
  By (\ref{eq:nu-ball}), we obtain
  \[ \nu(F) \ge \sum_{\boldsymbol{y} \in F \cap \mathcal{O}_{\ii^{(n)}}} \nu \big( B(\boldsymbol{y}, \eta_n/ 2) \big) \ge \#\big( F \cap \mathcal{O}_{\ii^{(n)}} \big) \cdot r^s \Big( \frac{\eta_n}{2} \Big)^s, \]
  that is, \[ \#\big( F \cap \mathcal{O}_{\ii^{(n)}} \big) \le \frac{2^s \nu(F)}{r^s \eta_n^s}. \]
  It follows from (\ref{eq:july3-1}) and (\ref{eq:mu-n-cylinder}) that
  \[ \nu_n(F) \le \frac{2^s}{r^s \eta_n^s \cdot \#\mathcal{O}_{\ii^{(n)}}} \nu(F) \le \frac{2^s}{r^s \ep_0} \nu(F).  \]
  Note that the cylinder set $F$ is a clopen subset in $\Sigma$.
  Letting $n \to \f$, we obtain \[ \lambda(F) \le \frac{2^s}{r^s \ep_0} \nu(F). \]
  Note that for any Borel subset $B \subset \Sigma$,
  \[ \lambda(B) = \inf\Big\{ \sum_{i=1}^\f \lambda(F_i): B \subset \bigcup_{i=1}^\f F_i,\; \text{each $F_i$ is a cylinder} \Big\}, \] which also holds for $\nu$.
  Therefore, we conclude that for any Borel subset $B \subset \Sigma$, \[\lambda(B) \le \frac{2^s}{r^s \ep_0} \nu(B). \]
  That is, $\lambda \ll \nu$. The proof is complete.
\end{proof}

Using the same argument as in the proof of Theorem \ref{thm:periodic-measure}, we obtain the following proposition.
For a Borel probability measure $\mu$ on $\R^d$, the \emph{(lower) Hausdorff dimension} of $\mu$ is defined by
\[ \dim_{\mathrm{H}} \mu := \inf\{ \dim_{\mathrm{H}} B: B\;\text{is Borel and}\; \mu(B)>0 \}. \]

\begin{proposition}
  Suppose that $\mathcal{F} =\{f_i(x) = r_i O_i x + b_i\}_{i=1}^m$ is a self-similar IFS on $\R^d$ that satisfies the SSC.
  Let $\{ A_n \}_{n=1}^\f$ be a sequence of finite subsets of $E_{\F}$ with $\# A_n \to +\f$ as $n \to +\f$, and define \[ \mu_n = \frac{1}{\# A_n} \sum_{a \in A_n} \delta_a. \]
  If \[ \liminf_{n \to +\f} \# A_n \cdot  \eta(A_n) ^{s_{\F}} > 0, \]
  and $\{\mu_n\}_{n=1}^\f$ converges weakly to $\widetilde{\mu}$, then we have $\dim_{\mathrm{H}} \widetilde{\mu} = s_{\F}$.
\end{proposition}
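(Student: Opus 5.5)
The plan is to rerun the absolute-continuity argument from the proof of Theorem \ref{thm:periodic-measure}, dropping the invariance and ergodicity steps. The upper bound $\dim_{\mathrm H}\widetilde\mu\le s_\F$ is immediate: $\widetilde\mu$ is supported on the compact set $E_\F$, so $\widetilde\mu(E_\F)=1$ and $\dim_{\mathrm H}E_\F=s_\F$. For the lower bound I will show that $\widetilde\mu\le K\,\mathcal H^{s_\F}|_{E_\F}$ for some constant $K$. Then every Borel $B$ with $\widetilde\mu(B)>0$ has $\mathcal H^{s_\F}(B)>0$, and hence $\dim_{\mathrm H}B\ge s_\F$.

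I will work directly in $E_\F$ with the Ahlfors regularity (\ref{eq:sep-1}); one could instead pass to $\Sigma$ via $\pi$ and Lemma \ref{lem:pi-bi-Lipschitz}. By hypothesis there are $\ep_0>0$ and $n_1$ with $\#A_n\cdot\eta(A_n)^{s}\ge\ep_0$ for $n\ge n_1$, where $s=s_\F$. Since $\#A_n\to+\f$ and $E_\F$ is totally bounded, $\eta_n:=\eta(A_n)\to0$.

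Fix an open set $U\subset\R^d$ and a compact $K'\subset U\cap E_\F$, and let $\delta>0$ be less than the distance from $K'$ to $\R^d\setminus U$. For large $n$ we have $\eta_n/2<\delta$, so the balls $B(a,\eta_n/2)$ with $a\in A_n\cap K'$ are pairwise disjoint and contained in $U$. By (\ref{eq:sep-1}),
\[
\#(A_n\cap K')\cdot C^{-1}(\eta_n/2)^s\le \mathcal H^s(E_\F\cap U).
\]
Dividing by $\#A_n$ and using $\#A_n\eta_n^s\ge\ep_0$ gives
\[
\mu_n(K')\le \frac{2^sC}{\ep_0}\,\mathcal H^s(E_\F\cap U).
\]
Since $K'$ is closed, the portmanteau theorem gives $\widetilde\mu(K')\le\limsup_n\mu_n(K')$.

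Inner regularity of $\widetilde\mu$ on $U$ then yields
\[
\widetilde\mu(U)\le \frac{2^sC}{\ep_0}\,\mathcal H^s(E_\F\cap U).
\]
Outer regularity of the finite Borel measure $\mathcal H^s|_{E_\F}$ extends this inequality to all Borel sets.

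The main obstacle is the limit passage. Weak convergence only controls closed sets from above, while the packing argument naturally bounds open neighbourhoods, so the two have to be combined as above through the compact $K'$ inside $U$. In the symbolic model this issue disappears, because cylinders are clopen; that is exactly how the proof of Theorem \ref{thm:periodic-measure} handled it, and I will fall back on that route if the Euclidean one gets awkward.
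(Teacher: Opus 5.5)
Your overall plan is the same as the paper's: dominate $\widetilde\mu$ by a constant multiple of $\mathcal H^{s_\F}|_{E_\F}$. However, the limit passage, which you yourself single out as the crux, uses the portmanteau theorem in the wrong direction. For a closed set $F$, weak convergence gives $\limsup_n\mu_n(F)\le\widetilde\mu(F)$, not $\widetilde\mu(F)\le\limsup_n\mu_n(F)$. Mass can accumulate onto a closed set from outside; already $\delta_{1/n}\to\delta_0$ with $F=\{0\}$ shows this. Weak convergence bounds the limit measure from above only on \emph{open} sets.

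The reversed inequality genuinely fails under the hypotheses of the proposition. In a homogeneous SSC system, take a compact $K'\subset E_\F$ with $\mathcal H^{s_\F}(K')>0$ and empty interior relative to $E_\F$. Let $A_n$ consist of one point of $f_w(E_\F)\setminus K'$ for each word $w$ of length $n$. Then $\#A_n\cdot\eta(A_n)^{s_\F}$ is bounded below and $\mu_n\to\mu_{\p}$, yet $\mu_n(K')=0<\mu_{\p}(K')$. So, as written, your estimate on $\mu_n(K')$ gives no information about $\widetilde\mu(K')$.

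The repair is to run your packing estimate on an open set. Keep $K'$, $U$ and $\delta$ as you chose them, and let $V$ be the open $\delta/2$-neighbourhood of $K'$. Once $\eta_n<\delta$, the balls $B(a,\eta_n/2)$ with $a\in A_n\cap V$ are pairwise disjoint, centred in $E_\F$, and contained in $U$. Your computation then gives $\mu_n(V)\le 2^sC\ep_0^{-1}\mathcal H^s(E_\F\cap U)$ with $s=s_\F$. Since $\widetilde\mu(K')\le\widetilde\mu(V)\le\liminf_n\mu_n(V)$, this yields the bound you wanted. Your remaining steps (inner regularity of $\widetilde\mu$, $\widetilde\mu(E_\F)=1$, outer regularity of $\mathcal H^s|_{E_\F}$) then go through.

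The paper avoids the issue by exactly the fallback you mention. It transfers everything to $\Sigma$ via $\pi^{-1}$, where cylinders are clopen, so $\widetilde\nu(F)=\lim_n\nu_n(F)$ for each cylinder $F$. The packing bound gives $\widetilde\nu(F)\le C_0\,\nu_{\p}(F)$ on cylinders, which extends to Borel sets by covering and pushes forward to $\widetilde\mu\le C_0\,\mu_{\p}$.
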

\begin{proof}
Write $\mu = \mu_{\p}$ and $\nu= \nu_{\p}$ for $\p=(r_1^{s_\F}, r_2^{s_\F}, \ldots, r_m^{s_\F})$. Then $\mu=\nu\circ\pi^{-1}$. Note first that $\widetilde{\mu}$ is supported on $E_{\F}$, i.e., $\widetilde{\mu}(E_{\F})=1$. So we have $\dim_{\mathrm{H}} \widetilde{\mu} \le \dim_{\mathrm{H}} E_{\F} = s_{\F}$ (cf. \cite[Thoerem 9.3]{Falconer-2014-book}).
We only need to show that $\dim_{\mathrm{H}} \widetilde{\mu} \ge s_{\F}$.

Let $B_n = \pi^{-1}(A_n) \subset \Sigma$ and define \[ \nu_n = \frac{1}{\# B_n} \sum_{\boldsymbol{y} \in B_n} \delta_{\boldsymbol{y}}. \]
Note by Lemma \ref{lem:pi-bi-Lipschitz} that $\pi$ is bi-Lipschitz.
In the metric dynamical system $(\Sigma, d_{\F}, \sigma)$, we have $\# B_n = \# A_n$ and \[ \liminf_{n \to +\f} \# B_n \cdot   \eta(B_n) ^{s_\F} > 0. \]
Passing to a subsequence, we may assume that $\{ \nu_n \}_{n=1}^\f$ converges weakly to $\widetilde{\nu}$.
Note that $\mu_n = \nu_n \circ \pi^{-1}$.
Then we have $\widetilde{\mu} = \widetilde{\nu} \circ \pi^{-1}$.

By the same argument in the proof of Theorem \ref{thm:periodic-measure}, there exists a constant $C_0>0$ such that for any Borel set $B \subset \Sigma$, we have
\[ \widetilde{\nu}(B) \le C_0 \nu(B). \]
It follows that
\[ \widetilde{\mu}(B) \le C_0\cdot \mu(B) \quad \forall\;\text{Borel}\; B \subset \R^d. \]
Note that $\mu$ is actually the normalization of $\mathcal{H}^{s_\F}|_{E_{\F}}$.
For any Borel set $B \subset \R^d$ with $\widetilde{\mu}(B)>0$, we have $\mu(B)>0$, i.e., $\mathcal{H}^{s_\F}(B \cap E_{\F})>0$, and hence, $\dim_{\mathrm{H}} B \ge s_{\F}$.
Thus, we conclude that $\dim_{\mathrm{H}} \widetilde{\mu} \ge s_{\F}$.
\end{proof}

We end this section with an example showing that the condition (\ref{eq:periodic-point-s-F}) is only a sufficient condition in Theorem \ref{thm:periodic-measure}.
\begin{example}\label{example-1}
  Consider the self-similar IFS $\mathcal{F} = \{ f_0(x) = x/3, f_1(x)= (x+2)/3 \}$. Then $E_{\mathcal{F}}$ is the classical middle-third Cantor set and $s_\mathcal{F} = \log 2 / \log 3$.
  Let $\mu$ be the self-similar measure associated with the probability vector $(1/2,1/2)$.
  Let $\Sigma = \{0,1\}^{\N}$ be the symbolic space over the alphabet $\{0,1\}$, equipped with the matric $d_{\mathcal{F}}$ defined in (\ref{eq:d-F}).

  For $k, \ell \in \N$, let $\ii^{(k,\ell)} \in \Sigma$ be the binary  expansion of \[ y_{k,\ell} = \frac{1}{(4^k - 1)3^\ell}. \]
  Clearly, $\ii^{(k,\ell)}$ is a periodic point in the symbolic dynamical system $(\Sigma, d_\mathcal{F}, \sigma)$ for any $k, \ell \in \N$.
  We claim that
  \begin{equation}\label{eq:example-claim}
    \# \mathcal{O}_{\ii^{(k,\ell)}} = 2k \cdot 3^\ell \quad \text{and} \quad 3^{-N_{k,\ell}-1} \le \eta(\mathcal{O}_{\ii^{(k,\ell)}}) \le 3^{-N_{k,\ell}+1},
  \end{equation}
  where $N_{k,\ell} \in \N$ satisfies
  \begin{equation}\label{eq:N-k-ell}
    2^{N_{k,\ell}} <(4^k -1)3^\ell < 2^{N_{k,\ell}+1}.
  \end{equation}

  We first have \[\# \mathcal{O}_{\ii^{(k,\ell)}} = \min \big\{ n \in \N: 2^n \equiv 1 \pmod{3^\ell(4^k-1)} \big\}.\]
  Suppose that $2^n \equiv 1 \pmod{(4^k-1)3^\ell}$. It follows that $2^n \equiv 1 \pmod{(4^k-1)}$, and hence $(4^k-1)|(2^n-1)$, which implies $2k \mid n$. Write $n = 2k q$ for some $q \in \N$.
  Then we have
  \begin{equation}\label{eq:mid-ell-q}
(4^k -1)3^\ell \mid (4^{kq} - 1).
  \end{equation}
  Consider the power of prime factor $3$, and by lifting the exponent lemma (cf. \cite[Theorem 1.37]{Pongsriiam-2023}) we have $v_3(4^{kq}-1) = v_3(4^k-1) + v_3(q)$, where $v_p(m):= \max\{ n \ge 0: p^n \mid m  \}$.
  This together with (\ref{eq:mid-ell-q}) implies that $v_3(q) \ge \ell$, i.e., $3^\ell \mid q$.
  Thus, we conclude that $\# \mathcal{O}_{\ii^{k,\ell}} = 2k \cdot 3^\ell$.

  By (\ref{eq:N-k-ell}), we have $2^{-N_{k,\ell}-1}<y_{k,\ell}< 2^{-N_{k,\ell}}$. So the sequence $\ii^{(k,\ell)}$ begins with precisely $N_{k,\ell}$ consecutive zeros.
  It follows that
  \[ \eta(\mathcal{O}_{\ii^{(k,\ell)}}) \le d_{\mathcal{F}}\big( \ii^{(k,\ell)}, \sigma(\ii^{(k,\ell)}) \big) =3^{-N_{k,\ell}+1}. \]
  Note that $\sigma^{n}(\ii^{(k,\ell)})$ is the binary expansion of
  \begin{equation}\label{eq:y-k-ell-n}
    y_{k,\ell,n} \equiv \frac{2^n}{(4^k - 1)3^\ell} \pmod{1}.
  \end{equation}
  Suppose on the contrary that $\eta(\mathcal{O}_{\ii^{(k,\ell)}}) < 3^{-N_{k,\ell}-1}$.
  Then there exist $n_1, n_2 \in \N$ such that $\sigma^{n_1}(\ii^{(k,\ell)}) \ne \sigma^{n_2}(\ii^{(k,\ell)})$, and the sequences $\sigma^{n_1}(\ii^{(k,\ell)})$ and $\sigma^{n_2}(\ii^{(k,\ell)})$ share a common prefix of length $N_{k,\ell}+1$.
  It follows from (\ref{eq:N-k-ell}) that \[ |y_{k,\ell,n_1} - y_{k,\ell,n_2}| \le 2^{-N_{k,\ell}-1} < \frac{1}{(4^k-1)3^\ell}. \]
  By (\ref{eq:y-k-ell-n}) we obtain that $y_{k,\ell,n_1} = y_{k,\ell,n_2}$. This leads to a contradiction. Thus, we have proved the claim (\ref{eq:example-claim}).

  Let $x_{k,\ell} = \pi(\ii^{(k,\ell)})$, where $\pi$ is the coding mapping defined in (\ref{eq:coding-map}).
  By Lemma \ref{lem:pi-bi-Lipschitz} and (\ref{eq:example-claim}), $x_{k,\ell}$ is a periodic point in $(E_\mathcal{F}, |\cdot|,T_{\mathcal{F}})$, and
  \[\# \mathcal{O}_{x_{k,\ell}}= 2k \cdot 3^\ell \quad \text{and} \quad  C^{-1}3^{-N_{k,\ell}-1} \le \eta(\mathcal{O}_{x_{k,\ell}}) \le C 3^{-N_{k,\ell}+1},\]
  where $C>1$ is a constant.
  By (\ref{eq:N-k-ell}) and using $s_\F=\log 2/\log 3$ it follows that
  \begin{equation}\label{eq:bound-product}
    \frac{k}{C^{s_{\mathcal{F}}} (4^k-1)} \le \# \mathcal{O}_{x_{k,\ell}} \cdot   \eta(\mathcal{O}_{x_{k,\ell}})^{s_{\mathcal{F}}} \le \frac{8k C^{s_{\mathcal{F}}}}{4^k-1}.
  \end{equation}
  Let \[ \mu_{k,\ell} = \frac{1}{\# \mathcal{O}_{x_{k,\ell}}} \sum_{y \in \mathcal{O}_{x_{k,\ell}}} \delta_y. \]

  Fix $k \in \N$. By (\ref{eq:bound-product}), we have
  \[ \liminf_{\ell \to +\f} \# \mathcal{O}_{x_{k,\ell}} \cdot  \eta(\mathcal{O}_{x_{k,\ell}})^{s_{\mathcal{F}}} \ge \frac{k}{C^{s_{\mathcal{F}}} (4^k-1)} >0. \]
  By Theorem \ref{thm:periodic-measure}, $\{\mu_{k,\ell}\}_{\ell=1}^\f$ converges weakly to $\mu$ as $\ell \to +\f$. Thus, we can choose a sufficiently large integer $\ell_k$ such that
  \[ d_{w}(\mu_{k,\ell_k}, \mu) < 1/k, \]
  where $d_{w}$ is a metric compatible with the weak topology on the space of all Borel probability measures on $E_{\mathcal{F}}$.

  Now we obtain a sequence of periodic points $\{x_{k,\ell_k}\}_{k=1}^\f$ such that the corresponding sequence of periodic measures $\{\mu_{k,\ell_k}\}_{k=1}^\f$ converges weakly to $\mu$ as $k \to +\f$, but by (\ref{eq:bound-product}) we have
  \[  \lim_{k \to \f} \# \mathcal{O}_{x_{k,\ell_k}} \cdot  \eta(\mathcal{O}_{x_{k,\ell_k}})^{s_{\mathcal{F}}} =0. \]
\end{example}

\section{Final remarks}\label{sec:remarks}

Suppose that $\mathcal{F} =\big\{ f_i(x) = r_i O_i x + b_i \big\}_{i=1}^m$ is a self-similar IFS on $\R^d$ satisfying the SSC.
By Theorem \ref{thm:critical-value}, we have $\mathfrak{s}(E_\F, |\cdot|, T_\F)=s_\F$.
It is natural to ask whether the $s_{\F}$-dimensional periodic content $\mathfrak{P}^{s_\F}(E_\F, |\cdot|, T_\F)$ is positive and finite.
The finiteness of $\mathfrak{P}^{s_\F}(E_\F, |\cdot|, T_\F)$ follows from Lemma \ref{lem:finite-P}.
When $\mathcal{F}$ is homogeneous, i.e., $r_i = r$ for all $1 \le i \le m$, Proposition \ref{prop:homogeneous} together with Theorem \ref{thm:P-s} (i) implies that $\mathfrak{P}^{s_\F}(E_\F, |\cdot|, T_\F)>0$.
However, when $\mathcal{F}$ is inhomogeneous, the proof of Theorem \ref{thm:critical-value} does not yield the positivity of $\mathfrak{P}^{s_\F}(E_\F, |\cdot|, T_\F)$.
This leads to the following question.

\begin{question}
  Suppose that $\mathcal{F} =\{f_i(x) = r_i O_i x + b_i\}_{i=1}^m$ is a self-similar IFS on $\R^d$ satisfying the SSC, and $r_i \ne r_j$ for some $1 \le i < j \le m$.
  Is the $s_{\F}$-dimensional periodic content $\mathfrak{P}^{s_\F}(E_\F, |\cdot|, T_\F)$ positive? In view of Theorem \ref{thm:P-s} (i), it is equivalent to ask whether there exists a sequence of periodic points $\{x_n\}$ in $(E_\F, |\cdot|, T_\F)$ such that
  \[ \lim_{n \to +\f}\# \mathcal{O}_{x_n} = +\f \quad \textrm{and} \quad \liminf_{n \to +\f} \# \mathcal{O}_{x_n} \cdot  \eta(\mathcal{O}_{x_n})^{s_{\F}} >0. \]
\end{question}

The result in (\ref{result-Yu-Gan}) of Gan and the third author can be reformulated as $\mathfrak{P}^d(\mathbb{T}^d, |\cdot|, T) >0$ for any ergodic endomorphism $T$ on the $d$-torus $\mathbb{T}^d$. Combined with Lemma \ref{lem:upper-bound}, this yields $\mathfrak{s}(\mathbb{T}^d, |\cdot|, T) = d$.
It is interesting to investigate the periodic dimension $\mathfrak{s}(X,d,T)$ for other dynamical systems, such as $\beta$-transformations on the unit circle, continued fraction dynamical system, or some other numeration dynamical systems. It would be also interesting to find the intrinsic connections between the periodic dimension and the complexity of the ambient dynamical system, such as topological entropy and  Lyapunov exponents.

\section*{Acknowledgements}
Kong was supported by Chongqing NSF: CQYC20220511052 and Scientific Research Innovation Capacity Support Project for Young Faculty No.~ZYGXQNISKYCXNLZCXM-P2P.
Wang was supported by the National Natural Science Foundation of China No.~12501110, 12471085.
Yu was supported by the Fundamental Research Funds for the Central Universities No.~2025CDJZKPT‑09, and the Chongqing Postdoctoral Retention Grant No.~2509013610520348.


\end{document}